\title{Application of modal filtering to a Spectral Difference Method}
\author{ 
  Jan Glaubitz, 
  Philipp  \"Offner, 
  and Thomas Sonar
}
\date{2018}
\theoremstyle{plain}
  \newtheorem{thm}{Theorem}[section]
  \newtheorem{lem}[thm]{Lemma} 
  \newtheorem{cor}[thm]{Corollary}
  \theoremstyle{definition}
  \newtheorem{ex}[thm]{Example}
\newtheorem{rem}[thm]{Remark}
\pgfplotsset{compat=1.11}
\newcommand{\subjclass}[1]{\textbf{MSC2010:} \ #1}
\newcommand{\keywords}[1]{\textbf{Keywords:} \ #1}
\NewDocumentCommand{\mat}{mo}{%
  \IfValueTF{#2}{%
    \underline{\underline{#1}}{#2}
  }{%
    \underline{\underline{#1}}\,
  }%
}
\newcommand{\diag}[1]{\operatorname{diag}\left(#1\right)}
\renewcommand{\d}{\operatorname{d}}
\renewcommand{\L}{\mathbf{L}}
\newcommand{\T}{\mathbb{T}} 
\newcommand{\ol}{\mathrm{o}}
\renewcommand{\epsilon}{\varepsilon}
\renewcommand{\phi}{\varphi}
\newcommand{\N}{\mathbb{N}}
\newcommand{\R}{\mathbb{R}}
\renewcommand{\bf}{\mathbf}
\renewcommand{\r}{\right}
\renewcommand{\l}{\left}
\newsavebox{\DelimiterBox}
\newlength{\DelimiterHeight}
\newlength{\DelimiterDepth}
\newsavebox{\ArgumentBox}
\newlength{\ArgumentHeight}
\newlength{\ArgumentDepth}
\newlength{\ResizedDelimiterHeight}
\begin{document}

\maketitle

\begin{abstract}
  We adapt the spectral viscosity (SV) formulation implemented as a modal filter to a 
Spectral Difference Method (SD) solving hyperbolic conservation laws. 
In the SD Method we use selections of different orthogonal polynomials (APK polynomials). 
Furthermore we obtain new error bounds for filtered APK extensions of smooth functions.
We demonstrate that the modal filter also depends on the chosen polynomial basis in the SD Method. 
Spectral filtering stabilizes 
the scheme and leaves weaker oscillations. 
Hence, the selection of the family of orthogonal polynomials on triangles and their specific modal 
filter possesses 
a positive influence on the stability and accuracy of the SD Method. 
In the second part, we initiate a stability analysis for a linear scalar test case with periodic 
initial condition 
to find the best selection of APK polynomials and their specific modal filter.
To the best of our knowledge, this work is the first that gives a stability analysis for a scheme 
with spectral filtering.
Finally, we demonstrate the influence of the underlying basis of APK polynomials in a well-known 
test case.
\end{abstract}

\subjclass{65M12, 65M70, 42C10}

\keywords{hyperbolic conservation laws, high order methods, Spectral Difference Method, orthogonal 
polynomials, modal filtering}

\section{Introduction}

In the field of Computational Fluid Dynamics (CFD), low-order methods are generally
robust and reliable and therefore employed in practical calculations. 
The main advantage of high-order methods towards low-order ones is the 
possibility of considerably more accurate solutions with 
the same computing cost, but unfortunately they are less robust and more
complicated. In recent years many researchers focus on this topic. 
There has been a surge of research activities to improve and refine 
high-order methods as well as to develop new ones with more favorable properties.
The Spectral Difference (SD) Method for simplex cells was first presented 
by Liu et al.  \cite{liu2006spectral}, and later extended by 
Wang et al.  \cite{wang2007spectral, sun2007high}. For one-dimensional, two-dimensional 
quadrilateral and three-dimensional hexahedral grids, the classical
SD Method is identical to the multi-domain staggered grid spectral element method proposed by Kopriva et al.
\cite{kopriva1996conservative, kopriva1995conservative}.  
Further contributions can be found inter alia in \cite{huang2005implicit, may2006spectral, van2008stability}.
There are many publications that show that the classical SD Method is closely 
related to the Discontinuous Galerkin (DG) Method 
and Spectral Volume  Method (or the same), see 
\cite{ allaneau2011connections, huynh2007flux, yu2014accuracy, wang2009unifying, van2007connection, huynh2011high, jameson2010proof} 
for example. Depending on  how the degrees of freedom (DOFs) are chosen, 
various method implementations have different numerical properties and efficiencies. 
But all of them have in common that they use piecewise continuous functions 
as approximation space for solutions. Here, we apply the classical orthogonal polynomials
on triangles in a Spectral Difference Method and focus on filtering techniques.
We don't use the nowadays more common Flux Reconstruction (FR) or
Correction Procedure via Reconstruction (CPR) methods, because we only want to investigate the pure influence of the orthogonal polynomials and 
their natural filters. In the FR/CPR approach, a correction term\footnote{With appropriate choices of 
correction terms, the FR framework recovers specific DG, SD, as well as spectral volume schemes.} is applied
which works at the interface and rectify the calculation in  in every step, 
for details see \cite{ranocha2015summation, huynh2007flux, vincent2011insights, 
huynh2014high, vincent2011newclass, ranocha2016enhancing, 
glaubitz2016enhancing} 
and references therein.
Furthermore, the SD Method on triangular grids is not stable, 
i.e. numerical solutions migth exceed all boundaries where they shouldn't. 
This allows us to oberve the influence of filter techniques and how they enable us 
to milder such instabilities of the scheme.
The SD Method combines the basic ideas of spectral methods and finite differences.
It directly reconstructs a flux polynomial based on fluxes on a given 
nodal set called flux points. Then the derivatives of the flux 
polynomial are used to update the solutions at the solution points. 
This approach prevents the use of quadrature rules like in a normal DG or Spectral Volume 
ansatz\footnote{There exists also quadrature-free implementations of DG, see for example  \cite{atkins1998quadrature}.}.
However,  the SD Method has stability issues especially for high orders or if discontinuities arise in the solution.
The root of the instability is that the nonlinear flux function is represented by an insufficient amount of points. 
This introduces aliasing errors \cite{kirby2006aliasing}. By increasing the dissipation we are able to mask such an aliasing problem. 
We apply the same approach like in spectral methods and use spectral viscosity to stabilize the calculation,
see \cite{maday1993legendre, tadmor1989convergence}. As suggested in \cite{ma1998chebyshev}, 
the spectral viscosity can be carried out within 
the spectral filtering framework, resulting in an  efficient
computational implementation. The Spectral Viscosity Method (SV) can be seen as a spectral method, 
but at each time step the numerical solution is filtered by an 
exponential filter which 
depends on the chosen set of orthogonal polynomials.\\
Meister et al.  \cite{meister2012application} apply the Proriol-Koornwinder-Dubiner (PKD) polynomials  in a
DG Method  and derive a relation between a modal 
filter for DG Methods on unstructured triangulation grids and the introduction of spectral viscosity to the scheme. 
The basic idea is to add a high order viscosity term, which 
is based on the Sturm-Liouville Operator of the polynomials
to the equation.\\
In this article, we consider 
the Spectral Difference Method as described 
in  \cite{wirz2014detecting}, but we extend it by the general classical orthogonal polynomials 
on triangles (APK polynomials)\footnote{The PKD or Dubiner polynomials are only one specific family of these polynomials.}
and their specific modal filters. With the differential operator of the APK polynomials, we
 show analogous 
to \cite{meister2012application} the close relation between SV and spectral filtering in the 
SD Method.\\ 
In a theoretical framework we prove some new upper bounds for the filtered APK expansion of smooth functions.
Here,
we generalize the theoretical results from \cite{meister2012application}, 
where only the properties of the filtered PKD expansion were analyzed.
Our viscosity term depends on the differential operator of the chosen polynomial set and 
so  
the exponential filter as well. Therefore
the selection of the
orthogonal polynomials and their natural filter have a positive impact
on the stability and accuracy of the method. By starting a 
stability analysis as presented
in \cite{huynh2007flux, van2008stability}
we get a better understanding of the influence of the orthogonal polynomials and their specific filters. 

This paper is organized as follows:\\
In Section \ref{Polynome} we will define 
these considered
polynomials and review some properties. 
We will repeat the main ideas on the Spectral Difference Method and explain our implementation in the next Section.
High order filters as the SV 
Method
as stabilizing technique are introduced in Section \ref{Filter}. 
Also, we prove
an error bound for the filtered APK expansion of smooth functions
and transfer the SV modification to our SD Method. 
In Section \ref{Stability} we start with the stability analysis for our SD Method.
A numerical experiment is presented before a conclusion and an outlook for future work finishes this paper.

\section{Appell-Proriol-Koornwinder polynomials and their properties}\label{Polynome}

In this Section we introduce 
the orthogonal polynomials under observation. 
We call the family of classical orthogonal polynomials on triangles 
Appell-Proriol-Koornwinder polynomials (APK polynomials). 
In \cite{offner2013spectral} the authors prove spectral convergence for the APK series. Their result gives us the theoretical foundation to use these polynomials  in a spectral method.\\
Let $P_n^{\alpha, \beta}(x)$ be the $n$-th Jacobi polynomial,
 $\T:=\{(x,y) \in \R^2| x\geq 0,y\geq 0, x+y\leq 1\}$ be the unit triangle and 
$h(x,y):=x^{\alpha-1}y^{\beta-1}(1-x-y)^{\gamma-\alpha-\beta} \; (
  \alpha, \beta,\gamma \in \N ,\gamma>\alpha+\beta-1$ and $\N=\{1,2,\cdots\})$ be the weight function, given in this domain.  
 For the sake of brevity, we introduce $p=\gamma-\alpha-\beta$ and $a_l=p+\beta+2l$.\\
Note that we only use $\alpha, \beta, \gamma \in \N$ for simplicity. In principle, $\alpha,\beta, \gamma \in \R^+_0$ is possible.

\begin{def}\label{DefPol}
The polynomials  $A_{m,l}(x,y)$, $m,l \in \N_0$, defined as
\begin{equation}\label{APK-polynomials}
 A_{m,l}(x,y):=P_m^{\alpha-1,a_l}(1-2x)P_l^{p,\beta-1}\left(\frac{2y}{1-x}-1 \right)(1-x)^l
\end{equation}
on $\T$ are called \emph{Appell-Proriol-Koornwinder polynomials} (APK polynomials). \\
If the triangle $\widetilde{\T}:=\{(x,y)\in \R^2|x\geq-1,y\geq -1, x+y\leq0 \} $ is used instead, then 
\eqref{APK-polynomials} transforms to
\begin{equation*}
 \widetilde{A}_{m,k}(x,y)=P_m^{\alpha-1,2l+\gamma-\alpha}(-x)P_{l}^{\gamma-\alpha-\beta,\beta-1}\left( \frac{2(y+1)}{1-x}-1 \right)\left(\frac{1-x}{2} \right)^l.
\end{equation*}
The special case $\alpha=\beta=1$ and $\gamma=2$ is called Proriol-Koornwinder-Dubiner polynomial (PKD polynomials)(see \cite{dubiner1991spectral, karniadakis2013spectral}) .
\end{def}
These are the classical orthogonal polynomials on triangles. Details of their properties can be found in  \cite{dunkl2014orthogonal, koornwinder1975two, suetin1999orthogonal}.
We start with the definition of the function spaces and norms we are dealing with. Then we summarize 
some of the  properties of APK polynomials and give a couple of estimates, 
which are needed in the sequel. The detailed proofs of these estimates can be found in \cite{offner2013spectral}.

\begin{def}\label{defNorm}
  Let $h(x,y)$ be the weight function on the triangle $\T$. 
  We denote by  $\L^2(\T,h)$ the Hilbert space with the inner product 
\begin{equation*}
(u,v) :=\int_{\T} h(x,y) u(x,y)v(x,y) dxdy
\end{equation*} 
which induces the weighted norm 
\begin{equation*}
 ||u||_{\L^2(\T,h)}:=\left( \int_{\T} h(x,y) |u(x,y)|^2 dxdy\right)^\frac{1}{2}.
\end{equation*}
We introduce by $H^m(\T,h)$ a \emph{weighted Sobolev space}. Precisely we set
\begin{align*}
 H^m(\T,h):=\{v\in \L^2(\T,h)&\text{:for each non-negative multi-index $\sigma$ with} \\ 
                           &\text{ $|\sigma| \leq m$, the distributional derivative $D^{\sigma}v$ }\\
                           &\text{belongs to $\L^2(\T,h)$}\}.
\end{align*}
 The space is endowed with the norm
\[
 ||v||_{H^m(\T,h)}:=\left(\sum\limits_{|\sigma| \leq m} ||D^\sigma v||^2_{\L^2(\T,h)} \right)^\frac{1}{2}.
\]
\end{def}

The APK polynomials are characterized 
by the integers $m$ and $l$ as can be seen in 
the definition. The degree of an APK polynomial then is $m+l$.
Furthermore the APK polynomials are orthogonal in $\L^2(\T,h)$, i.e.
\begin{equation*}
 (A_{m,l},A_{n,s})=\delta_{m,n}\delta_{l,s}\frac{1}{(2l+\gamma-\alpha)(2(m+l)+\gamma)\kappa_{l,m}^2}
\end{equation*}
with 
\begin{equation}\label{kappa}
\kappa_{l,m}:=\sqrt{\frac{(l+\beta)_p m(m+a_l)_{\alpha}}{(l+1)_p(m)_{\alpha} 
(m+a_l)} },
\end{equation}
where
\begin{equation*}
 (\xi)_{0}= 1, (\xi)_j=\prod\limits_{i=0}^{j} (\xi+i-1)   \text{ for } j\in \N
\end{equation*}
 is the usual Pochhammer symbol.
See \cite[Chapter X. p.288ff]{suetin1999orthogonal}.

Another important property of the APK polynomials can be found in the fact that they are solutions to the eigenvalue problem
\begin{equation}\label{Eigenwertgleichung}
  DA_{m,l}=\lambda_{m,l}A_{m,l} 
\end{equation}
where the differential operator $D$ is given by
\begin{equation*}
(x^2-x)\frac{\partial^2 }{\partial x^2} +2xy \frac{\partial^2 }{\partial x \partial y}+(y^2-y) \frac{\partial^2 }{\partial y^2}
+[(\gamma+1)x-\alpha ]\frac{\partial }{\partial x}
+[(\gamma+1)y-\beta ]\frac{\partial }{\partial y}, 
\end{equation*}
and the eigenvalue $\lambda_{m,l}=(m+l)(m+l+\gamma)$, see \cite[p.46]{dunkl2014orthogonal}.
Note that this eigenvalue equation is 
\textit{not} a Sturm-Liouville problem and also that the differential operator is not self-adjoint 
for all choices of $\alpha, \beta, \gamma$. 
It is only self-adjoint in case of 
$\alpha=\beta=1$ and $\gamma=2$.
But in fact, the operator is, what is called, \emph{potentially self-adjoint} in $\mathring{T}$, see \cite[p.136]{suetin1999orthogonal}. Hence 
there exists a positive $C^2$- function $g$, $(x,y) \mapsto g(x,y)$, so that
$gD$ is self-adjoint in $\mathring{\T}$. This will be important in Section \ref{Filter}, when we use the differential operator in the viscosity term. 
In the following Lemma we summarize 
two estimates which will be needed in the proof of Theorem \ref{Theorem1}. The 
proofs can be found in \cite{offner2013spectral}.

\begin{lem}\label{LemmaNorm+Betrag}
 The following norm estimate holds for APK polynomials,
\begin{equation}\label{1druchdieNormAPK}
 \frac{1}{||A_{m,l}||_{L^2(\T,h)}} \leq 2(m+l+\gamma) \kappa_{l,m} 
\end{equation}
with $\kappa_{l,m}$ given by (\ref{kappa}).\\

Furthermore let $(x,y) \in \mathring{\T}$. For all $l,m \in \N_0$  the following estimate holds,
\begin{equation}\label{BetragAPKimInneren}
 |A_{m,l}(x,y)| \leq \frac{\tilde{D}(x,y)}{(2l+\beta+p)^{\frac{1}{4}}(2(m+l)+\gamma)^{\frac{1}{4}} \kappa_{l,m} },
\end{equation}
where
\begin{equation*}
 \tilde{D}(x,y)= \frac{D}{2(1-x-y)^{\frac{p}{2}+\frac{1}{4}}y^{\frac{1}{4}+\frac{\beta-1}{2}}
x^{\frac{1}{4}+\frac{\alpha-1}{2}}(1-x)^{\frac{1}{4}}}
\end{equation*}
and $D < 144$ is a positive constant.
 The value of $A_{m,l}(1,0)$ is
\begin{equation}\label{|A_{m,0}|}
 |A_{m,l}(1,0)|= \begin{cases}
                  \binom{m+\gamma-\alpha}{m} \quad \text{      if    } l=0\\
                 \hspace{0.5cm}     0 \qquad \text{      else }
                 \end{cases}.
\end{equation}
\end{lem}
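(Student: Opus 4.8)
The plan is to treat the three estimates separately, since only \eqref{BetragAPKimInneren} is substantial while \eqref{1druchdieNormAPK} and \eqref{|A_{m,0}|} follow quickly from the normalization and from a direct evaluation. For the norm estimate \eqref{1druchdieNormAPK} I would read off the diagonal of the orthogonality relation stated above, giving $\|A_{m,l}\|_{\L^2(\T,h)}^2 = \big[(2l+\gamma-\alpha)(2(m+l)+\gamma)\kappa_{l,m}^2\big]^{-1}$, hence $\|A_{m,l}\|^{-1} = \sqrt{(2l+\gamma-\alpha)(2(m+l)+\gamma)}\,\kappa_{l,m}$. It then suffices to show $\sqrt{(2l+\gamma-\alpha)(2(m+l)+\gamma)} \le 2(m+l+\gamma)$. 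Since $\alpha \ge 1$ and $m,\gamma \ge 0$, each factor under the root satisfies $2l+\gamma-\alpha < 2l+\gamma \le 2(m+l+\gamma)$ and $2(m+l)+\gamma \le 2(m+l+\gamma)$; multiplying and taking the square root gives the claim.

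For the boundary value \eqref{|A_{m,0}|} I would substitute $(x,y)=(1,0)$ directly into \eqref{APK-polynomials}, where the explicit factor becomes $(1-x)^l = 0^l$. For $l \ge 1$ the block $P_l^{p,\beta-1}\!\big(\tfrac{2y}{1-x}-1\big)(1-x)^l$ is, after clearing the denominator $\tfrac{2y}{1-x}-1 = \tfrac{2y-(1-x)}{1-x}$, a homogeneous polynomial of degree $l$ in the two linear forms $(1-x)$ and $(2y-(1-x))$, both of which vanish at $(1,0)$; hence the block vanishes and $A_{m,l}(1,0)=0$. For $l=0$ the second Jacobi factor is the constant $1$ and $(1-x)^0=1$, so $A_{m,0}(1,0)=P_m^{\alpha-1,a_0}(-1)$ with $a_0 = p+\beta = \gamma-\alpha$; invoking the classical value $P_m^{a,b}(-1)=(-1)^m\binom{m+b}{m}$ yields $|A_{m,0}(1,0)| = \binom{m+\gamma-\alpha}{m}$.

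The real work is the interior bound \eqref{BetragAPKimInneren}. The tool I would use is a Szeg\H{o}-type uniform pointwise estimate for Jacobi polynomials of the form $|P_n^{a,b}(\cos\theta)| \le C\, n^{-1/2}(\sin\tfrac\theta2)^{-a-1/2}(\cos\tfrac\theta2)^{-b-1/2}$, applied to each factor of \eqref{APK-polynomials}. For the first factor set $\cos\theta = 1-2x$, so that $\sin^2\tfrac\theta2 = x$ and $\cos^2\tfrac\theta2 = 1-x$; for the second set $\cos\phi = \tfrac{2y}{1-x}-1$, so that $\sin^2\tfrac\phi2 = \tfrac{1-x-y}{1-x}$ and $\cos^2\tfrac\phi2 = \tfrac{y}{1-x}$. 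Multiplying the two bounds with the explicit factor $(1-x)^l$, the powers of $x$, $y$ and $1-x-y$ assemble into exactly the weights appearing in $\tilde D(x,y)$, while the powers of $(1-x)$ telescope: the contribution $-(a_l+\tfrac12)/2$ from the first factor cancels against the $+l$ from $(1-x)^l$ together with the $(1-x)$ pieces pulled out of the second factor's weights, leaving the net exponent $-\tfrac14$, which is precisely the $(1-x)^{1/4}$ in the denominator of $\tilde D$.

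The main obstacle is that the parameter of the first Jacobi factor, $a_l = p+\beta+2l$, grows linearly with $l$, so a bound that treats the implied constant $C$ as fixed is not admissible. I would therefore need a version of the pointwise estimate that is explicit in both the degree and the (large) parameters, and it is tracking this combined dependence that converts the naive $m^{-1/2}l^{-1/2}$ decay into the stated factors $(2l+\beta+p)^{-1/4}(2(m+l)+\gamma)^{-1/4}$ together with the normalization $\kappa_{l,m}^{-1}$, and that pins down the absolute constant $D<144$. Collecting these pieces then establishes \eqref{BetragAPKimInneren}.
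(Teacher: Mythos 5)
Note first that the paper never proves this lemma itself: it is stated as a summary of estimates whose proofs are explicitly delegated to \cite{offner2013spectral}, so your proposal has to be judged on its own merits rather than against an in-paper argument.

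Your two easy parts are correct and complete. For \eqref{1druchdieNormAPK}, reading $\|A_{m,l}\|^{-2}_{\L^2(\T,h)}=(2l+\gamma-\alpha)(2(m+l)+\gamma)\kappa_{l,m}^2$ off the stated orthogonality relation and checking $\sqrt{(2l+\gamma-\alpha)(2(m+l)+\gamma)}\le 2(m+l+\gamma)$ factor by factor (valid since $\alpha\ge 1$, $\gamma\ge 1$, $m\ge 0$) is all that is needed. For the evaluation at $(1,0)$, your observation that $P_l^{p,\beta-1}\bigl(\tfrac{2y}{1-x}-1\bigr)(1-x)^l$ is a homogeneous degree-$l$ polynomial in the two linear forms $1-x$ and $2y-(1-x)$, both vanishing at $(1,0)$, cleanly disposes of $l\ge 1$, and $P_m^{(a,b)}(-1)=(-1)^m\binom{m+b}{m}$ with $b=a_0=\gamma-\alpha$ gives the $l=0$ case. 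Your weight bookkeeping for the interior bound is also right: with $\sin^2\tfrac{\theta}{2}=x$, $\cos^2\tfrac{\theta}{2}=1-x$, $\sin^2\tfrac{\phi}{2}=\tfrac{1-x-y}{1-x}$, $\cos^2\tfrac{\phi}{2}=\tfrac{y}{1-x}$, the exponents of $x$, $y$ and $1-x-y$ match those in $\tilde D$, and the exponent of $1-x$ indeed telescopes to $-\tfrac14$.

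However, for \eqref{BetragAPKimInneren} the proposal stops exactly where the proof begins. The entire quantitative content of the estimate --- the decay factors $(2l+\beta+p)^{-1/4}(2(m+l)+\gamma)^{-1/4}$ in place of the naive $m^{-1/2}l^{-1/2}$, the appearance of the normalization $\kappa_{l,m}^{-1}$ given by \eqref{kappa}, and the absolute constant $D<144$ --- is deferred to ``a version of the pointwise estimate that is explicit in both the degree and the (large) parameters,'' which you neither state, nor prove, nor cite. As you yourself note, this is not a technicality: the parameter $a_l=p+\beta+2l$ of the first Jacobi factor grows linearly in $l$, and the classical Szeg\H{o} bound degenerates in exactly this regime, since its implied constant blows up with the parameters. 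A parameter-uniform bound for suitably normalized Jacobi polynomials, with explicit control of how the constant depends on $a_l$ and arranged so that the Pochhammer ratios assemble into $\kappa_{l,m}$, is the key lemma here; establishing or correctly invoking such a bound with the stated exponents and the numerical constant $144$ is precisely the work carried out in the reference the paper cites. Without it, the interior bound --- the only substantial claim of the lemma, and the one on which the proof of Theorem \ref{Theorem1} actually leans --- remains unproven.
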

\begin{rem}
  Similar estimates also hold for the other edges of $\T$ and can be seen in 
  \cite{offner2013spectral}.
\end{rem}

\section{An extended Spectral Difference Method}\label{SD Method}

In this paper we consider two-dimensional hyperbolic conservation laws of the form 
\begin{equation}\label{hypGleichung}
 \frac{\partial }{\partial t}{\boldsymbol{u}}({\boldsymbol{x}},t)=-\nabla_{\boldsymbol{x}} F({\boldsymbol{u}}({\boldsymbol{x}},t)), \quad ({\boldsymbol{x}},t) \in \Omega\times \R_0^+,
\end{equation}
where $\Omega \subset \R^2$ is an open polygonal domain and ${\boldsymbol{u}}({{\boldsymbol{x}}},t)\in \R^n$.
Furthermore, initial conditions ${\boldsymbol{u}}({\boldsymbol{x}},0)={\boldsymbol{u}}_0(\boldsymbol{x})$ and appropriate boundary conditions
are assumed to be given. 

Hence, let $\mathcal{T}_h$ be a conforming triangulation of the closure $\overline{\Omega}$ of the computational domain and let $\mathbb{P}^h$ be the piecewise polynomial space 
defined by $\mathbb{P}^h=\{  \ v_h|_{\tau_i}\in \mathcal{P}^N (\tau_i), \; \ \forall \tau_i\in \mathcal{T}_h \}$, where $\mathcal{P}^N(\tau_i)$ denotes the 
space of all polynomials on $\tau_i$ of degree less than or equal to $N$.

The classical Spectral Difference Method, which has been proposed by Liu \cite{liu2006spectral} in 2006, can be seen as a FR
or collocation method. The basic idea of this method is to discretize the right hand side of the underlaying conservation law
(\ref{hypGleichung}) 
at certain solution points $\bf{x}_j$ in each cell. Then, the resulting ODEs (in $t$) at each ${\bf{x}}_j$ can be solved by an arbitrary explicit time-stepping scheme. 
Here we used the $4$-th order low storage Runge-Kutta scheme defined by Carpenter and Kennedy, see \cite{carpenter1994fourth}.
We say that a scheme is of \textit{order} $N+1$, when it is exact for $\boldsymbol{u} \in \left[ \mathcal{P}^N \right]^n$. 
Since the derivate of the flux $F$ is applied to update 
$\boldsymbol{u}$, 
one needs to be exact for $F\in \left[ \mathcal{P}^{N+1} \right]^{n\times 2}$.
We approximate the flux $F$ in each element  $\tau_i$ of our triangulation $\mathcal{T}_h$ using the basis polynomials $\phi_k$ in the 
following way
\begin{equation}\label{Fluss}
\begin{pmatrix} F_1({\boldsymbol{x}},t) \\ F_2({\boldsymbol{x}},t) 
\\ \end{pmatrix}=\sum\limits_{k=1}^{K_F} 
\begin{pmatrix} \hat{F}_{k,1}(t) \\ \hat{F}_{k,2}(t) \\ \end{pmatrix} 
 \phi_k({ \boldsymbol{x}}).
\end{equation}

The classical approach uses Lagrange polynomials $L_k$ with corresponding coefficients 
$\hat{F}_{k,{\nu}}(t)=F_{\nu}({{\bf x}_k },t)$ for $\nu = 1,2$
, where ${\bf{x}}_k$ are chosen flux points.
If we want to achieve a method of order $N+1$, we need the reconstruction of the solution $\boldsymbol{u}$ to lie in $\left[ \mathcal{P}^N \right]^n$ 
and the reconstruction of the flux $F$ to lie in $\left[ \mathcal{P}^{N+1} \right]^{n\times 2}$. Hence we need $K_s := \frac{(N+1)(N+2)}{2}$ solution 
points $\bf{x}_j$ and $K_F := \frac{(N+2)(N+3)}{2}$ flux points $\bf{x}_k$. 
That means points where the flux value is computed. 
Here we apply the  classic orthogonal polynomials on triangles $\phi_k=A_{m,l}$ (in lexicographic order $k$). 
In order to adapt the polynomial basis $\phi_k$ to every  triangular cell $\tau_i$, we introduce an orientation-preserving affine transformation $T_i$
which maps an arbitrary triangle $\tau_i$ to the standard triangle $\T$ of the orthogonal basis $\phi_k$, see Figure \ref{Trafo}. 
\begin{figure}[!htb]
\centering
  \includegraphics[width=0.8\textwidth]{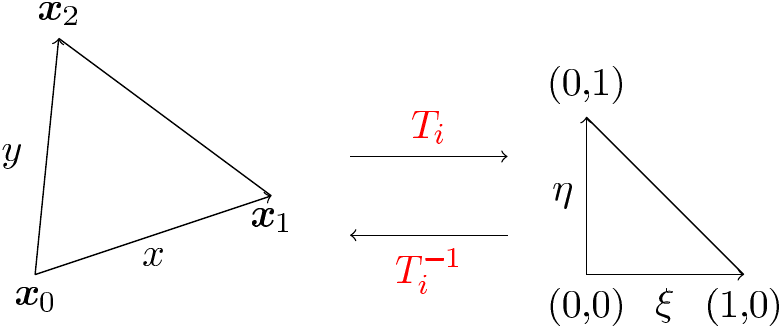}
  \caption{transformation of an arbitrary triangle to the standard element }
  \label{Trafo}
\end{figure}
By linearity of the differential operator, inserting \eqref{Fluss} into \eqref{hypGleichung} leads to
\begin{equation*}
{ \boldsymbol{u}}_t({\boldsymbol{x}},t)=-\sum\limits_{k=1}^{K_F} 
 \begin{pmatrix} \hat{F}_{k,1}(t) \\ \hat{F}_{k,2}(t) \\ \end{pmatrix}\cdot \nabla_{\boldsymbol{x}} \phi_k(T_i({\boldsymbol{x}})). 
\end{equation*}

Applying the transformation and the chain rule to 
\begin{equation*}
\nabla_{\boldsymbol{x}} \phi_k(T_i({\boldsymbol{x}}))= J_{T_i} \nabla_{\boldsymbol{\xi}}\phi_k(T_i(\boldsymbol{x})), 
\end{equation*}
finally leads to the 
universal update scheme 
\begin{equation}\label{update}
 {\boldsymbol{u}}_t({\boldsymbol{x}},t)=-\sum\limits_{k=1}^{K_F} 
 \begin{pmatrix} \hat{F}_{k,1}(t) \\ \hat{F}_{k,2}(t) \\ \end{pmatrix}\cdot  J_{T_i} \nabla_{\boldsymbol{\xi}}\phi_k(T_i(\boldsymbol{x})).
\end{equation}
where for each cell only the Jacobian $J_{T_i}$ has to be stored.
%
  For the stability analysis in Section \ref{Stability} we will derive benefit from the following matrix representation 
  of (\ref{update}). If we denote the vector $\bigl( {\boldsymbol{u}}_t({\bf{x}}_j,t) \bigr)_j$ at the $K_s$ solution
  points ${\bf x}_j$ by $\frac{ \d {\bf u}^s}{ \d t}(t)$ and $J_{T_i}$ by $\begin{pmatrix}
    \xi_x & \xi_y \\ 
    \eta_x & \eta_y
  \end{pmatrix}$, the universal update scheme reads
  \begin{align}\label{schemeMatrix1}
    \frac{ \mathrm{d} \mathbf{u}^s }{ \mathrm{d} t }(t) = 
    & - \Bigl( \sum_{k=1}^{K_F} \hat{F}_{k,1}(t) \bigl[ \xi_x \partial_\xi \phi_k(T_i({\bf x}_j)) 
+ \xi_y \partial_\eta \phi_k(T_i({\bf x}_j)) \bigr] \Bigr)_j \nonumber \\
    & - \Bigl( \sum_{k=1}^{K_F} \hat{F}_{k,2}(t) \bigl[ \eta_x \partial_\xi \phi_k(T_i({\bf x}_j)) 
+ \eta_y \partial_\eta \phi_k(T_i({\bf x}_j)) \bigr] \Bigr)_j \nonumber \\
    = & - \xi_x D_\xi \hat{\mathbf{F}}_1(t) - \xi_y D_\eta \hat{\mathbf{F}}_1(t) - \eta_x 
D_\xi 
    \hat{\mathbf{F}}_2(t) - \eta_y D_\eta \hat{\mathbf{F}}_2(t)
  \end{align} 
  with 
  \begin{equation}\label{diffM}
    \begin{aligned}
  & D_\xi = \bigl( \partial_\xi \phi_k(T_i(\mathbf{x}_j)) \bigr)_{j,k} \; , \; \hat{\mathbf{F}}_1(t) 
= \bigl( \hat{F}_{k,1}(t) \bigr)_k \; \text{ and } \\
  &  D_\eta = \bigl( \partial_\eta \phi_k(T_i({\bf x}_j)) \bigr)_{j,k} \; , \; \hat{\mathbf{F}}_2(t) 
= \bigl( \hat{F}_{k,2}(t) \bigr)_k.
    \end{aligned}
\end{equation}

\begin{minipage}{0.39\textwidth}
\includegraphics[width=0.7\textwidth]{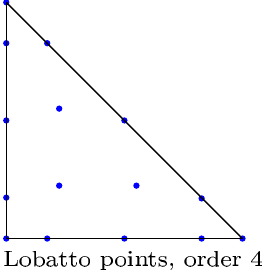}\label{Lobatto}
\end{minipage}
\begin{minipage}{0.59\textwidth}
In spite of the Lagrange reconstruction approach where the flux coefficients are directly known we have to compute 
$\hat{F}_{k, { \nu} }(t)$, 
in every time step. We choose an interpolation approach, i.e. we interpolate the values of the flux function at certain flux points $\{{\bf{x}}_k \}$ from the given flux $F$, which will be specified later on. 
The nodal set $\{ {\bf x}_k \}$ is chosen as the set of two dimensional Lobatto points on a triangle, see Figure \ref{Lobatto}, proposed by Blyth
and Pozrikidis
\cite{blyth2006lobatto} since they are easy to implement and have good interpolation properties, 
as 
for instance a low condition number. 
\end{minipage}

The condition number
of the Vandermonde matrix $\mathcal{V}$ in the interpolation approach 
 depends on the parameters $\alpha, \beta$ and $\gamma$ of the APK polynomials, 
see Table \ref{Konditionszahl}.
The numbers in the braces describe the parameters $(\alpha, \beta, \gamma)$ of the APK-polynomials. 
We realize that Lagrange polynomials lead to completely bad conditioned basis compared to 
any of the considered APK families. 
Regarding this, one should note that the coefficients of the Lagrange reconstruction are directly given 
by the values of the flux at the flux points. 
This values however are obtained from the values of the flux at the solution points by Lagrange interpolation. 
Additional numerical errors might arise.

\begin{table}[ht]
\begin{center}
 \begin{tabular}{c|c|c|c|c|c|c|c|c|c}
 $N$ &  $2$ & $3$ &  $4$& $5$& $6$& $7$ & $8$& $9$ & $10$  \\ \hline \hline
 Lagr. poly. &  $24$ & $270$ & $2023$ & $10^4 $ & $10^5$ & $7\cdot10^5$ & $5\cdot 10^6  $ & $3\cdot 10^7$ & $2\cdot 10^8$  \\
 $(0.1,0.1,0.2)$ &$ 72 $ & $23$ & $103 $ & $162 $ & $ 305 $ & $315 $ & $ 429$ & $ 577$ & $ 681 $ \\
 $(0.5,0.5,1)$& $12 $ & $17$   & $ 30$ & $42$  & $52 $ & $ 71$ & $82 $ & $100 $ & $ 121$  \\
 $(1,1,2)$           &  $12$   & $20 $ & $ 39 $ & $53 $ & $71$ & $94 $ & $121 $ & $ 151$ & $196 $ \\
  $(1,1,6)$ & $29$ & $ 117$ & $391 $ & $ 973 $ & $2365 $ & $4484 $ & $8008 $  & $ 10^4$ & $2\cdot10^4 $ \\
 $(2,2,5)$ & $17$ & $44 $ & $96 $ & $161 $ & $244 $ &  $450$ & $ 662 $  & $ 853 $ & $1341 $ \\
 $(3,3,20)$ & $ 28 $ & $131$ & $416 $ & $929 $ & $2310 $ & $ 4711 $ & $ 10^4 $ & $2\cdot 10^4 $ & $3\cdot 10^4 $\\
 $(10,10,20)$ & $306 $  &  $2307$ &  $2 \cdot 10^4  $ &  $8 \cdot 10^4  $ &  $4 \cdot 10^5 $& $2 \cdot 10^6  $ & $8 \cdot 10^6 $ &  $2 \cdot 10^7 $ & $8 \cdot 10^7  $    \\
 $(1,2,3)$ & $11 $  &  $33$ &  $66  $ &  $99 $ &  $147 $& $209 $ & $295 $ &  $409 $ & $532 $   \\
 $(2,1,3)$ & $17$ & $34$  & $66 $ & $123$  & $178 $ & $ 243$ & $388 $ & $446 $ & $ 570 $ 
\end{tabular} 
\caption{condition numbers $\kappa_N$}\label{Konditionszahl}
\end{center}
\end{table}

Additionally, enough flux points lie on the edges of an element to ensure global conservation \cite{wirz2012spektrale}, 
which is then 
realized by replacing the original flux $F$ at flux points ${\bf{x}}_k$ at the edges by a numerical flux $F^{num}$ whose normal component is computed from a numerical flux function $H$, i.e.
$F_k^{num}\cdot {\bf{n}}=H({\boldsymbol{u}}_{-} ({\bf{x}}_k,t),{\boldsymbol{u}}_{+}({\bf{x}_k},t),\bf{n})$. 
The whole flux is used in the scheme, so 
that one needs a second condition to determine the numerical flux. As in \cite{wang2007high}, we enforce $F^{num}$ to maintain the same tangential component as the original flux, i.e. 
$F_k^{num}\cdot {\bf{t}}=F({\boldsymbol{u}}({\bf{x}}_k,t))\cdot \bf{t}$, 
which uniquely defines the numerical flux $F_k^{num}=(F_{k,1}^{num},F_{k,2}^{num})^T$ at each edge point ${\bf{x}}_k$.
Now, the coefficients $ \hat{F}_{k,{\nu}}(t)$ 
in equation \eqref{update} can be computed by solving the system of equation ${\bf{F}}_{\nu}(t)=\mathcal{V}\cdot \hat{\bf{F}}_{\nu}(t)$
for each component $\nu=1,2$, where $\hat{\bf{F}}_{\nu}=\left( \hat{F}_{k,\nu}(t) \right)_k, \; \mathcal{V}=\left(\phi_k(\bf{x}_j) \right)_{j,k}$ is the Vandermonde matrix and 
$\hat{\bf{F}}_{\nu}(t)=(F_{k,\nu})_k$ is given by 
\begin{equation}\label{numerical flux}
 F_{k,{\nu}}=\begin{cases}
            F_{\nu}({\boldsymbol{u}}({\bf{x}}_k,t)), \quad \text{ if } {\bf{x}}_k \in \mathring{\tau_i} \\ 
            F_{k,\nu}^{num}, \hspace{1.0cm} \quad \text{ if }  {\bf{x}}_k \in \partial \tau_i.
           \end{cases}
\end{equation}

If we apply this to (\ref{schemeMatrix1}) the universal update scheme reads
\begin{align}\label{schemeMatrix2}
  \frac{ \d \mathbf{u}^s }{ \d t}(t) = - ( \xi_x D_\xi + \xi_y D_\eta ) \mathcal{V}^{-1} {\bf 
F}_1(t) - ( \eta_x D_\xi + \eta_y D_\eta ) \mathcal{V}^{-1} {\bf F}_2(t)
\end{align}
in a matrix representation. In Section \ref{Stability} we will use this result to obtain the form
\begin{align*}
  \frac{ \d }{ \mathbf{u}^s}{ \d t}(t) = S {\bf u}^s(t)
\end{align*}
with a matrix $S$ for a suitable test case.
In the numerical examples in Section \ref{Numerical Test} we will use the 
Godunov flux
\begin{equation*}
 H(u_{-},u_{+},{\bf{n}})=\begin{cases}
                      \min \limits_{u_{-}\leq u\leq u_{+}} F(u)\cdot {\bf{n}}, \qquad \text {if }  u_-\leq u_+,\\
                     \max \limits_{u_{+}\leq u\leq u_{-}} F(u)\cdot {\bf{n}}, \qquad \text{else}, \\                      
                     \end{cases}
\end{equation*}
in the scalar case for the numerical flux function. The Godunov 
flux is an Upwind flux. For the linear transport equation  we 
obtain the numerical flux 
\begin{equation*}
 H(u_{-},u_{+},{\bf{n}})=\begin{cases}
                       {\bf{(a\cdot n)}} u_{-}, \qquad  {\bf{(a\cdot n)}}\geq 0,\\
                                  {\bf{(a\cdot n)}} u_{+}, \qquad  {\bf{(a\cdot n)}} <0.\\         
                     \end{cases}
\end{equation*}

We will apply this numerical flux function in the stability analysis in Section 
\ref{Stability} as well.
For a more detailed explanation of the Spectral Difference Method and the 
extension we advise \cite{wirz2012spektrale}.
Yet, to at least highlight a significant advantage of the Spectral 
Difference Method, we observe the experimental order of convergence (EOC) in 
the $L_\infty$-norm for the linear advection equation 
\begin{equation*}
\frac{\partial }{\partial 
t}{\boldsymbol{u}}({\boldsymbol{x}},t)= -\frac{\partial }{\partial 
x}{\boldsymbol{u}}({\boldsymbol{x}},t) - \frac{\partial }{\partial 
y}{\boldsymbol{u}}({\boldsymbol{x}},t) \text{ with } (\boldsymbol{x},t) \in 
[-1,1]^2 
\times [0,0.5]
\end{equation*}
and for the smooth initial condition $u(\boldsymbol{x},0)=\sin \pi(x+y)$.
In Table \ref{Transport1}, this is done with respect to both the number of 
triangles (EOC(k)) and the degree of the polynomial approximation in each 
triangle (EOC(N)). For these approximations the parameters 
$(\alpha,\beta,\gamma)=(2,2,5)$ were chosen. Note that for unstructured 
triangulations some measurement $h$ for the size of the triangles should be 
used for the rate of convergence. Here for example, $h$ was the biggest volume 
of the triangles. At the same time however, in this test, $h$ was directly 
inversely proportional to $k$, i.e. $h = const \cdot \frac{1}{k}$. 

\begin{table}[!htb] 
\centering
\begin{tabular}{ r | c | c | c | c }
  $k$ & 68 & 272 & 1088 & 4352 \\ \hline
  $h$ & 0.0497 & 0.0124 & 0.0031 & 0.0008
\end{tabular}
\end{table}

Since EOC(h) is equals to EOC(k). 
For the sake of simplicity, we therefore observed 
convergence in the number of triangles $k$ and not their size $h$. 

\begin{table}[!htb]
  \centering
 \begin{tabular}{c|c|c|c|c| c| c| c| c| c| }
 
 $N$  & $k$ & $L_{\infty}$-error  & EOC(k) & EOC(N) & 
$L_1$-error & 
$L_2$-error  & time   \\ \hline \hline
 1 &  68 & 1.226072e+00 & & & 5.234411e-01 & 3.483826e-01 & 8 
\\   
2  &68& 2.999127e-01 & & 2.03 & 1.106267e-01  & 8.451124e-02 & 
15 \\ 
3  &68& 6.865602e-02& & 3.63 & 1.461619e-02 & 1.290576e-02 & 28 
\\ 
4  &68& 1.358577e-02& & 5.63 & 1.654693e-03 & 1.770760e-03 &  48 
   \\ 
5  &68& 1.583057e-03& & 9.63 & 1.353175e-04  & 1.657258e-04 & 78 
    \\ 
\hline
1  &272& 4.154960e-01& 0.78 & & 1.638561e-01 & 
1.079845e-01 & 33  \\ 
2  &272& 4.481682e-02& 1.37 & 3.21 & 1.738056e-02 & 
1.300429e-02 & 62 \\ 
3  &272& 5.600234e-03& 1.80 & 5.12 & 1.203168e-03 & 
1.050415e-03 & 110  \\ 
4  &272& 5.135337e-04& 2.36 & 8.30 & 6.683444e-05 & 
6.523343e-05 & 190    \\
5  &272 & 2.973202e-05& 2.86 & 12.76 & 3.036868e-06 & 
3.352791e-06 & 312    \\ 
\hline
1  &1088& 1.141089e-01& 0.93 & & 4.514174e-02 & 
2.951796e-02 & 129  \\ 
2  &1088& 6.801320e-03& 1.36 & 4.06 & 2.623593e-03 & 
1.942249e-03 & 243   \\ 
3  &1088& 4.438368e-04& 1.82 & 6.73 & 1.007609e-04 & 
8.647829e-05 & 437   \\ 
4  &1088& 1.793560e-05& 2.41 & 11.15 & 3.016446e-06 & 
2.706330e-06 & 761   \\ 
5  &1088& 5.468877e-07& 2.88 & 15.64 & 8.022185e-08 & 
7.697035e-08 & 1247  \\ 
\hline
1  &4352& 3.105485e-02& 0.93 & & 1.174054e-02 & 
7.637636e-03 & 526    \\ 
2  &4352& 1.458704e-03& 1.11 & 4.41 & 4.029470e-04 & 
3.134510e-04 & 860    
\\ 
3  &4352& 4.706511e-05& 1.61 & 8.46 & 8.869529e-06 & 
7.971082e-06 & 1766 \\ 
4  &4352& 1.320516e-05& 0.22 & 4.41 & 2.170014e-07 & 
3.488747e-07 & 3079 \\ 
5  &4352& 2.180251e-06& -0.99 & 8.07 & 1.510489e-08 & 
4.853751e-08 & 5144\\ 
\end{tabular} \caption{$ (\alpha,\beta,\gamma)=(2,2,5 ),\;$  
$t=0.5s$}\label{Transport1}

\end{table}

Table \ref{Transport1} clearly indicates the rate of convergence 
to be considerably higher when increasing the polynomial degree instead of 
refining the triangulation. 
For sufficiently smooth solutions, schemes using a polynomial 
approximation, in fact, often provide significant higher rates of convergence 
than classical ones, where low degrees (in particular 0) are used.
Note that in Table \ref{Transport1}, the same behavior can be observed for 
for the $L_1$- and $L_2$-norm.

\section{The filtering process}\label{Filter}
In the conservation law, 
discontinuities may arise in the solution. Using a series expansion 

\begin{equation}\label{Entwicklung}
 P_N u(x,y)=\sum\limits_{\substack{l+m\leq N \\l,m\ \in \N_0}} 
 \tilde{u}_{m,l}A_{m,l}(x,y); \quad 
 \tilde{u}_{m,l}=\frac{(u;A_{m,l})_{\L^2(\T,h)}}{(A_{m,l};A_{m,l})_{\L^2(\T,h)}}
\end{equation}

to approximate the solution leads 
to spurious oscillations in the vicinity of discontinuities (called Gibbs phenomenon). 
The oscillations occur in the approximated solution because the high coefficients of 
the series expansion turns slowly to zero, see \cite{gottlieb1977numerical, gottlieb1997gibbs}.\\
The aliasing error effects stability problems in the SD Method. 
The root of the instabilities is that the nonlinear flux function is represented by an 
insufficient amount of points. This introduces aliasing errors. These stability problems display especially near the oscillations. 
The amplitudes increase in time exceeding the error expected from the pure Gibbs phenomenon,  see Section
\ref{Numerical Test}. A higher inherent dissipation is able to mask such an aliasing problem. By
adding spectral viscosity to the equation we increase the dissipation. 
\\
To remedy the
Gibbs' effect, different approaches can be found in the literature 
\cite{gelb2006robust, gottlieb2001spectral, gottlieb1997gibbs, vandeven1991family}. A common approach is to use a modal filter, which appeals directly on the high-order coefficients of the series expansion. 
While multiplying a filter function to the high Fourier coefficients, the series loses their approximation properties. We prove new error bounds for the filtered APK series expansion 
of smooth function. We generalize the results of \cite{meister2012application} to all series expansion with classic orthogonal polynomials on triangles. 
Global filtering in each cell degrades 
the order of accuracy, see \cite{meister2012application}. 
Therefore we apply a well-known jump indicator \cite{persson2006sub}
and transfer the SV modification to the SD Method.

\subsection{Modal filters}
A modal filter appeals directly to 
the coefficients of the series expansion. 
\begin{def}\label{defFilter}
 For an integer $p\geq 1$ we define a \emph{filter} of order $p$ as a real function 
$\sigma\in C^{p-1}([0,1])$ with the properties
 \begin{equation}\label{Filtereigenschaften1}
 \begin{aligned}
  \sigma(0)&=1, \\
  \sigma^{(k)}(0)&=0, \qquad 1\leq k \leq p-1.
 \end{aligned}
 \end{equation}
\end{def}
Additionally to the properties \eqref{Filtereigenschaften1} many authors demand 
the following condition for a filter of order $p$:
\begin{equation}\label{Filtereigenschaften2}
 \sigma^{(k)}(1)=0 \qquad 0\leq k \leq p-1,
\end{equation}
compare \cite{gottlieb1997gibbs, vandeven1991family}.  In \cite{hesthaven2008filtering}, Hesthaven and Kirby require 
in addition  $\sigma \in C^p([0,1])$. A very popular filter is the exponential filter
\begin{equation}\label{Exponentialfilter}
 \sigma(\eta)=\exp(-\alpha \eta^p),
\end{equation}
where the filter strength $\alpha$ yields $\exp(-\alpha)$ in the range of the 
machine accuracy. 
We already mentioned that 
there are various partial results for the approximation property of a filtered 
series expansion.
For instance, in \cite{vandeven1991family} Vandeven has analyzed the filtered 
Fourier expansions of a piecewise smooth function and later expanded his result 
to filtered Chebyshev series. On the other hand, Kirby and Hesthaven 
investigate the approximation 
property of the filtered Legendre expansion for sufficiently smooth functions 
in \cite{hesthaven2008filtering}. 
\cite{meister2012application} expands the investigation to two-dimensional basis functions 
(PKD polynomials). 
Finally, we now complete 
the investigation from \cite{meister2012application}, by proving an error bound  
for the filtered APK partial sum. 
Again, we stress that the PKD polynomials are just a special case of the APK 
polynomials.
Here we prove 
the most general case and consider the filtered APK expansions 
for 
sufficiently smooth functions 
$u:\T\to \R$,
\begin{equation*} 
 u_N^{\sigma}(x,y)=\sum\limits_{l+m\leq N} \sigma \left(\frac{l+m}{N}\right) \tilde{u}_{m,l} A_{m,l}(x,y), \qquad N \geq 1,
\end{equation*}
with coefficients $\tilde{u}_{m,l}$ given by \eqref{Entwicklung}.

\begin{thm}\label{Theorem1}

 Let $u \in H^{2k} (\T,h) \cap C(\T), k \in \N$, $h(x,y)=x^{\alpha-1}y^{\beta-1}(1-x-y)^p $ with $\alpha, \beta \in \N$ and $ p \in \N_0 $ and let $\sigma $  be a modal 
 filter of order $2k-1$, with the additional condition  $\sigma\in C^{2k-1}([0,\varepsilon))$ in an interval $[0,\varepsilon)\subset [0,1]$, $\varepsilon>0$. 
Furthermore let $k>\max \left\{\ \frac{3}{4}+\frac{3}{4}\alpha+\frac{p}{2},\frac{5}{4}+\frac{p+\beta}{2}, \frac{1}{4}+\frac{3}{4}\alpha+\frac{\beta}{2} \right\} $.\\
Then we obtain the pointwise error bounds with  constants $K_1-K_6$:
\begin{enumerate}
 \item If $(x,y) \in  \mathring{\T}$,  it is
 \begin{equation*}\label{Inneredreieckfilter} 
 |u(x,y)-u_N^{\sigma}(x,y)|\leq  K_1 \frac{1}{N^{2k-\frac{7}{4}}}.
 \end{equation*}
\item 
On the left edge $ [0,y] $ with $y \in [0,1]$ we obtain
 \begin{equation*}\label{linkekantefilter} 
 |u(0,y)-u_N^{\sigma}(0,y)|\leq \begin{cases}
                               K_2 \frac{1}{N^{2k-\frac{3}{2}\alpha-p-\frac{1}{2}}}, & \text{ for } p> \beta-1, \\
                               K_3 \frac{1}{N^{2k-\frac{3}{2}\alpha-\beta+\frac{1}{2}}}, &\text { for } p\leq \beta-1. \\
                               \end{cases}
 \end{equation*}
\item 
On the edge $[x,0] $ with $x\in (0,1)$ the error is 
\begin{equation*}\label{unterkantefilter}
 |u(x,0)-u_N^{\sigma}(x,0)| \leq  K_4 \frac{1}{N^{2k-\frac{3}{4}-\beta}}.
\end{equation*}
\item On the hypotenuse $[x,1-x]$ with $x\in (0,1)$ pertain
\begin{equation*}\label{Hypothenusefilter}
 |u(x,1-x)-u_N^\sigma(x,1-x)|\leq K_5 \frac{1}{N^{2k-\frac{7}{4}-p}}.
\end{equation*}
\item
In the point $(1,0)$ we preserve
\begin{equation*}\label{Punktfilter}
 |u(1,0)-u_N^\sigma(1,0)|\leq K_6 \frac{1}{N^{2k+\alpha-\gamma -\frac{3}{2}}},
\end{equation*}
\end{enumerate}
\end{thm}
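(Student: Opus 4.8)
The plan is to bound the pointwise error at each of the five locations by splitting it into a truncation tail and a filter defect,
\[
u(x,y)-u_N^{\sigma}(x,y)=\sum_{l+m>N}\tilde u_{m,l}A_{m,l}(x,y)+\sum_{l+m\le N}\Bigl(1-\sigma\bigl(\tfrac{l+m}{N}\bigr)\Bigr)\tilde u_{m,l}A_{m,l}(x,y),
\]
and to control each term using three ingredients: fast decay of the coefficients $\tilde u_{m,l}$, the pointwise bounds on $|A_{m,l}|$ from Lemma \ref{LemmaNorm+Betrag}, and the vanishing of $1-\sigma$ near the origin.

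First I would exploit the eigenvalue relation \eqref{Eigenwertgleichung}. Since $D$ is potentially self-adjoint on $\mathring{\T}$, there is a weighting making $D$ symmetric for $(\cdot,\cdot)_{\L^2(\T,h)}$, so that integrating by parts $k$ times (the boundary contributions dropping because $h$ degenerates on $\partial\T$) gives $(u,A_{m,l})=\lambda_{m,l}^{-k}(D^k u,A_{m,l})$, hence $\tilde u_{m,l}=\lambda_{m,l}^{-k}\widetilde{(D^k u)}_{m,l}$ with $\lambda_{m,l}=(m+l)(m+l+\gamma)$. Because $u\in H^{2k}(\T,h)$, the function $D^k u$ lies in $\L^2(\T,h)$, so Parseval bounds $\sum_{m,l}\widetilde{(D^k u)}_{m,l}^2\,\|A_{m,l}\|^2=\|D^k u\|_{\L^2(\T,h)}^2\lesssim\|u\|_{H^{2k}(\T,h)}^2$; writing $c_{m,l}:=\widetilde{(D^k u)}_{m,l}\,\|A_{m,l}\|$, these $c_{m,l}$ form an $\ell^2$ sequence. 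This is the decisive gain of $\lambda_{m,l}^{-k}\sim(m+l)^{-2k}$ in every coefficient.

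Next, for the interior point I would combine the norm estimate \eqref{1druchdieNormAPK} with the interior bound \eqref{BetragAPKimInneren}; the factors $\kappa_{l,m}$ from \eqref{kappa} cancel, leaving $|A_{m,l}(x,y)|/\|A_{m,l}\|\lesssim \tilde D(x,y)\,(m+l)^{3/4}(2l+\beta+p)^{-1/4}$. Inserting the coefficient decay, applying Taylor's theorem to the filter (the order-$(2k-1)$ conditions \eqref{Filtereigenschaften1} give $|1-\sigma(\eta)|\le C\eta^{2k-1}$ on $[0,\varepsilon)$), and summing both the tail and the defect by Cauchy--Schwarz against $\sum c_{m,l}^2\le\|D^k u\|^2$ then yields part (1), the exponent $2k-\tfrac{7}{4}$ in the denominator emerging from counting the modes with $m+l=n$ and summing the resulting scalar series in $n$. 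For the edges (parts 2--4) I would repeat this with the boundary analogues of \eqref{BetragAPKimInneren} (the Remark after Lemma \ref{LemmaNorm+Betrag}), whose different powers of $m$ and $l$ --- in particular the binomial growth $P_m^{\alpha-1,a_l}(1)=\binom{m+\alpha-1}{m}$ appearing along $x=0$, and the competition between the endpoint exponents $p$ and $\beta-1$ that produces the two cases on the left edge --- account for the shifted rates; the hypothesis $k>\max\{\ldots\}$ is precisely what forces these boundary series to converge. For the vertex (part 5) the sum collapses via \eqref{|A_{m,0}|} to the $l=0$ slice with $|A_{m,0}(1,0)|=\binom{m+\gamma-\alpha}{m}\sim m^{\gamma-\alpha}$, and the same scheme gives $N^{-(2k+\alpha-\gamma-3/2)}$.

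The step I expect to be the main obstacle is making the $k$-fold integration by parts rigorous: $D$ is not self-adjoint for general $(\alpha,\beta,\gamma)$, only potentially self-adjoint, and the weight $h$ vanishes or blows up on $\partial\T$, so one must justify that the boundary terms vanish and that $D^k u$ is genuinely represented by $\sum\lambda_{m,l}^k\tilde u_{m,l}A_{m,l}$ for $u\in H^{2k}(\T,h)\cap C(\T)$. The remaining difficulty is bookkeeping: tracking the precise powers of $m$, $l$ and of the distance to $\partial\T$ in the edge and vertex estimates so that the five rates and the threshold on $k$ come out exactly.
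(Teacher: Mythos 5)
Your proposal is correct and follows essentially the same route as the paper's proof: the eigenvalue relation \eqref{Eigenwertgleichung} combined with potential self-adjointness (the positive $C^2$ weight $g$) to obtain $\tilde u_{m,l}=\lambda_{m,l}^{-k}$ times coefficients of $D^k u$, the split into truncation tail plus filter defect, Cauchy--Schwarz against $\|D^k u\|_{\L^2(\T,h)}$, the cancellation of $\kappa_{l,m}$ between \eqref{1druchdieNormAPK} and \eqref{BetragAPKimInneren}, Taylor expansion of $\sigma$ at $0$ to control the defect sum (in the paper, via the limiting integral $\int_0^1(1-\sigma(\tau))^2\tau^{5/2-4k}\,\d\tau$), and the collapse to the $l=0$ slice via \eqref{|A_{m,0}|} at the vertex. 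Even the obstacle you flag --- justifying the $k$-fold transfer of $D$ onto $u$ --- is resolved in the paper exactly by the recursive potentially-self-adjoint argument you sketch, with the tail ($R_N$) bounds imported from the proof of Theorem 3.1 in the reference on spectral convergence of APK series.
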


\begin{proof}
We start the proof similar to \cite[Theorem 3.1]{offner2013spectral}.
 Let $m+l\neq 0$, then
\begin{equation*}
(A_{m,l};A_{m,l})_{\L^2(\T,h)}\cdot|\tilde{u}_{m,l}|=(u;A_{m,l})_{\L^2(\T,h)} 
\stackrel{\eqref{Eigenwertgleichung}}{=}\left(u;\frac{D A_{m,l}}{\lambda_{m,l}}\right)_{\L^2(\T,h)}
\end{equation*}
holds. 
We use the fact that the differential operator $D$ is potentially self-adjoint in $\mathring{\T}$ and that the boundary is a  set of measure zero. 
Due to the operator $D$ to be potentially self-adjoint, we find a positive  $C^2$-function $g$ so that $gD$ is self-adjoint, $\frac{1}{g}$ is well defined and symmetric.
 So we get  
\begin{align*}
\left(u;\frac{D A_{m,l}}{\lambda_{m,l}}\right)_{\L^2(\T,h)} &= \left(u;\frac{(gD) A_{m,l}}{g\cdot \lambda_{m,l}}\right)_{\L^2(\T,h)}= \frac{1}{\lambda_{m,l}}\left(\frac{1}{g}(gD)u;A_{m,l} \right)_{\L^2(\T,h)} \\
&\stackrel{\text{recursive}}{=}\left( \frac{1}{\lambda_{m,l}}\right)^k \left(D^ku;A_{m,l} \right)_{\L^2(\T,h)} 
\end{align*}
and hence have shown
\begin{equation}\label{Theorem1.1}
 (A_{m,l};A_{m,l})_{\L^2(\T,h)}\cdot|\tilde{u}_{m,l}|=\left( \frac{1}{\lambda_{m,l}}\right)^k \left(D^ku;A_{m,l} \right)_{\L^2(\T,h)}. 
\end{equation}

By this and
\begin{equation*}
 u(x,y)=\sum\limits_{m,l\in \N_0} \tilde{u}_{m,l} A_{m,l}(x,y)
\end{equation*}
is satisfied 
for every point $(x,y) \in \T$, see \cite[equation (18)]{offner2013spectral}, we get
\begin{align*} 
&|u(x,y)-u_N^{\sigma}(x,y)|= \Bigg|\sum\limits_{\substack{0\leq l+m\leq N\\l,m \in \N_0} }\left(1-\sigma\left(\frac{l+m}{N}\right)\right)\tilde{u}_{m,l}A_{m,l}(x,y)
+\sum\limits_{\substack{l+m> N\\l,m \in \N_0} } \tilde{u}_{m,l}A_{m,l}(x,y) \Bigg|\\
&= 
\Bigg| \int\limits_{\T}h(x_1,y_1)\left[ S_N(x,y,x_1,y_1) +R_N(x,y,x_1,y_1)\right] D^ku(x_1,y_1)\; \d x_1 \;\d y_1 \Bigg|,
\end{align*}
for the function $u$, 
where $S_N(x,y,x_1,y_1)$ and $R_N(x,y,x_1,y_1)$ are defined by 
\begin{align*}
S_N(x,y,x_1,y_1)&=\sum\limits_{\substack{1\leq l+m\leq N\\l,m \in \N_0} }\left(1-\sigma\left(\frac{l+m}{N}\right)\right) \frac{A_{m,l}(x_1,y_1)A_{m,l}(x,y)}{||A_{m,l}||^2_{\L^2(\T,h)} \lambda^{k}_{m,l}},\\
R_N(x,y,x_1,y_1)&=\sum\limits_{\substack{l+m> N\\l,m \in \N_0}}\frac{A_{m,l}(x_1,y_1)A_{m,l}(x,y)}{||A_{m,l}||^2_{\L^2(\T,h)} \lambda_{m,l}}.
\end{align*}
We apply the Cauchy-Schwarz inequality to estimate the integral. Therefore we have to estimate the weighted $\L^2(\T)$-norms of  $S_N$ and $R_N$ for all $(x,y)$. 
For the $R_N$ we extract the results from the proof of  Theorem 3.1 from \cite{offner2013spectral}. We summarize the results in Table \ref{Tabellebesser}.
\begin{table}[h]
\begin{center}
\begin{tabular}[]{|c|c|c|} \hline
area & $\max\{\binom{l+p}{l}, \binom{l+\beta-1}{l}  \}$ & $ ||R_N(x,y,\cdot,\cdot)||_{\L^2(\T,h)}<$ \\ \hline \hline
$\mathring{\T}$ & & $ C_{R_1} N^{-2k+\frac{7}{4}} $\\
$[0,y]$ & $\binom{l+p}{l}$ & $ C_{R_{2(1)}}N^{-2k+\frac{3}{2}\alpha+p+\frac{1}{2} } $ \\ 
$[0,y]$ & $\binom{l+\beta-1}{l} $ & $ C_{R_{2(2)}}N^{-2k+\frac{3}{2}\alpha+\beta-\frac{1}{2} }$ \\
$[x,0]$  &  & $ C_{R_3}N^{-2k+\frac{3}{4}+\beta}$\\
$[x,1-x]$ &  & $C_{R_4}N^{-2k+\frac{7}{4}+p}$\\
$(1,0)$&  & $C_{R_5}N^{-2k-\alpha+\gamma+\frac{3}{2}}$\\
\hline
\end{tabular}\caption{$C_{R_1},\ldots,C_{R_5} \in \R^+$ are constants.}\label{Tabellebesser}
\end{center}
\end{table}
Now we are able to estimate the $S_N$. We follow the proof of \cite[Theorem 3.1]{offner2013spectral} and start with some inner point $(x,y)\in \mathring{\T}$.
It is 
\begin{align*}
 &||S_N(x,y,\cdot,\cdot)||^2_{\L^2(\T,h)}= \sum\limits_{\substack{1\leq l+m\leq N\\l,m \in \N_0} }\left(1-\sigma\left(\frac{l+m}{N}\right)\right) ^2 
 \frac{A^2_{m,l}(x,y)}{||A_{m,l}||^2_{\L^2(\T,h)} \lambda^{2k}_{m,l}}\\
&\stackrel{\substack{\eqref{1druchdieNormAPK} \& \\ \eqref{BetragAPKimInneren}}}{\leq} \sum\limits_{\substack{1\leq l+m\leq N\\l,m \in \N_0} }\left(1-\sigma\left(\frac{l+m}{N}\right)\right) ^2 
\frac{\kappa^2_{m,l} 4 (m+l+\gamma)^2 \tilde{E}^2(x,y)}{\kappa_{l,m}^2(2l+\beta+p)^\frac{1}{2}(m+l)^{2k} (m+l+\gamma)^{2k} }\\
&\leq 4 \tilde{E}^2(x,y) \sum\limits_{\substack{1\leq l+m\leq N\\l,m \in \N_0} } \left(1-\sigma\left(\frac{l+m}{N}\right)\right) ^2  \frac{1}{(m+l)^{2k+\frac{1}{2}}(m+l+\gamma)^{2k-2} }\\
\end{align*}
Using the identity 
\begin{equation*}
 \sum\limits_{\substack{0<l+m\leq N\\l,m \in \N_0}} \left(\frac{1}{m+l} \right)^k=\sum\limits_{i=1}^N \frac{i+1}{i^k}
\end{equation*}
as well as elementary estimates lead to 
\begin{align*}
||S_N(x,y,\cdot,\cdot)||^2_{\L^2(\T,h)}&< 8\tilde{E}(x,y)^2 \sum\limits_{i=1}^N\left(1-\sigma\left(\frac{i}{N}\right)\right) ^2 \frac{1}{i^{4k-\frac{5}{2}}} \frac{N^{4k-\frac{5}{2}}}{N^{4k-\frac{5}{2}}}\\
&=8\tilde{E}(x,y)^2 N^{-4k+\frac{7}{2}}\left( \frac{1}{N} \sum\limits_{ i=1}^N\left(1-\sigma\left(\frac{i}{N}\right)\right) ^2 \left(\frac{i}{N}\right)^{-4k+\frac{5}{2}} \right).
 \end{align*}
If $N\to \infty $, the last factor conforms with the integral
\begin{equation*}
 \int\limits_{0}^1(1-\sigma(\tau))^2 \tau^{\frac{5}{2}-4k}\d \tau.
\end{equation*}
The integral is bounded by 
the given conditions. To verify this fact, we calculate    the Taylor series of $\sigma$ in $0$. It is 
\begin{equation*}
 \sigma(\tau)=\sum\limits_{j=0}^{2k-1}\frac{1}{j!}\sigma^{(j)}(0)\tau^j+\ol(\tau^{2k-1})  \quad \forall \tau \in [0,\varepsilon).
\end{equation*}
In connection with  $\sigma(0)=1$ and $\sigma^{(j)}(0)=0$ for all $j=1,2,\ldots,2k-2$ the term reduces to 
\begin{equation*}
 \sigma(\tau)=1+\frac{1}{(2k-1)!}\sigma^{(2k-1)}(0)\tau^{2k-1}+ \ol(\tau^{2k-1}) 
\end{equation*}
for all $ \tau \in [0,\varepsilon)$. Using this in the integral, we get
\begin{align*}
 \int\limits_{0}^\varepsilon \left(\frac{\sigma^{(2k-1)}(0)}{(2k-1)!}\tau^{2k-1}+\ol(\tau^{2k-1}) \right)^2\tau^{-4k+\frac{5}{2}} \d \tau 
 + \int\limits_\varepsilon^1  (1-\sigma(\tau))^2 \tau^{\frac{5}{2}-4k}\d \tau <C^2_{I_1},
\end{align*}
because every part is bounded.  
Hence it applies 
\begin{equation*}
 ||S_N(x,y,\cdot,\cdot)||_{\L^2(\T,h)} <\sqrt{8} \tilde{E}(x,y)N^{-2k+\frac{7}{4}}C_{I_1}=C_{S_1}N^{-2k+\frac{7}{4}},
\end{equation*}
and with the Cauchy-Schwarz inequality we show  in total 
\begin{align*}
 |u(x,y)-u_N^{\sigma}(x,y)| \leq& ||D^ku||_{\L^2(\T,h)} \left(||R_N(x,y,\cdot,\cdot)||_{\L^2(\T,h)} +||S_N(x,y,\cdot,\cdot)||_{\L^2(\T,h)}  \right)  \\ 
 \leq& \underbrace{(||D^ku||_{\L^2(\T,h)} )(C_{R_1}+C_{S_1})}_{K_1} N^{-2k+\frac{7}{4}}.
\end{align*}
The computations for the edges are analogously obtained.
We present the results in the following table.
\begin{center}
\begin{tabular}[]{|c|c|c|} \hline
edge & $\max\{\binom{l+p}{l}, \binom{l+\beta-1}{l}  \}$ & $|u(x,y)-u_N^\sigma(x,y)|<$ \\ \hline \hline
$[0,y]$ & $\binom{l+p}{l}$ & $ \frac{K_2}{N^{2k-\frac{3}{2}\alpha-p-\frac{1}{2}}},$  \\ 
$[0,y]$ & $\binom{l+\beta-1}{l} $ & $  \frac{K_3 }{N^{2k-\frac{3}{2}\alpha-\beta+\frac{1}{2}}}$, \\
$[x,0]$  &  & $  \frac{K_4}{N^{2k-\frac{3}{4}-\beta}}$\\
$[x,1-x]$ &  & $  \frac{K_5}{N^{2k-\frac{7}{4}-p}}$\\
\hline
\end{tabular}
\end{center}

For the last point $(1,0)$ we calculate it directly.
\begin{align*}
&||S_N(1,0,\cdot,\cdot)||^2_{\L^2(\T,h)} 
\stackrel{}{=}  \sum\limits_{m=1 }^N\left(1-\sigma\left(\frac{m}{N}\right)\right) ^2\frac{A^2_{m,0}(1,0)}{||A_{m,0}||^2_{\L^2(\T,h)} \lambda^{2k}_{m,0}}\\
&\stackrel{\eqref{1druchdieNormAPK}}{<} 4 \left( \frac{(\beta)_p \gamma^{\alpha-1}}{(1)_p} \right) \sum\limits_{m=1}^N 
\left( 1-\sigma \left(\frac{m}{N} \right) \right)^2 \frac{\binom{m+\gamma-\alpha}{m}^2 }{m^{2k}(m+\gamma)^{2k-2} }\\
&<C_5 \sum\limits_{m=1}^N 
\left( 1-\sigma \left(\frac{m}{N} \right) \right)^2 \frac{ 1}{m^{2k}(m+\gamma)^{2k-2-2\gamma+2\alpha} } \frac{N^{4k-2-2\gamma+2\alpha}}{N^{4k-2-2\gamma+2\alpha}}\\
&<C_{S_{5}}N^{-4k+3 + 2\gamma-2\alpha}\left( \frac{1}{N}\sum\limits_{m=1}^N \left(1-\sigma\left(\frac{m}{N}\right) \right)^2 \left( \frac{m}{N}\right)^{{-4k+2+2\gamma-2\alpha}} \right),
\end{align*}
and after all it ensues 
\begin{equation*}
 |u(1,0)-u_N^{\sigma}(1,0)|<K_6 N^{-2k-\alpha+\gamma+\frac{3}{2}}.
\end{equation*}
\end{proof}

To verify Theorem \ref{Theorem1} we test the approximation speed. 
On the triangle $\T$, we approximate the function $f(x,y)= \sin\left( \pi 
(x+y) \right)$ by the filtered and unfiltered truncated series expansion with 
respect to the APK polynomials for parameters $(\alpha, \beta,\gamma)=(1,1,2)$ 
and $(2,2,5)$.
For the filters, we choose exponential filters \eqref{Exponentialfilter} of 
order $2$ and filter strength $0.001$ and of
order $4$ and strength $0.00001$. Furthermore, we applied the classical 
cosine-filter
\begin{equation*}
 \sigma(\eta)=0.5\left( 1+ \cos\l( \pi \eta\r) \right)
\end{equation*}
of second order.
In Figure \ref{error} the maximal errors of these truncated series expansions 
are illustrated with respect to the polynomial degree . 
$A$ and $E$ are the unfiltered APK-expansions, $B, F$ are the filtered 
APK-expansions, where the exponential filter of order $2$ was used, for $C,G$ 
the cosine-filter was used and for $D,H$ the exponential filter of order $4$ 
was used.
Therefore, $A$ is comparable to $ E$, $B$ to $F$, $C$ to $G$ and $D $ to $H$.
All calculations were done in Mathematica and by its integration 
subroutine in order to calculate the coefficients $\hat{u}_{m,l}$.

\begin{figure}[!htb]
\centering
\includegraphics[width=0.5\textwidth]{%
  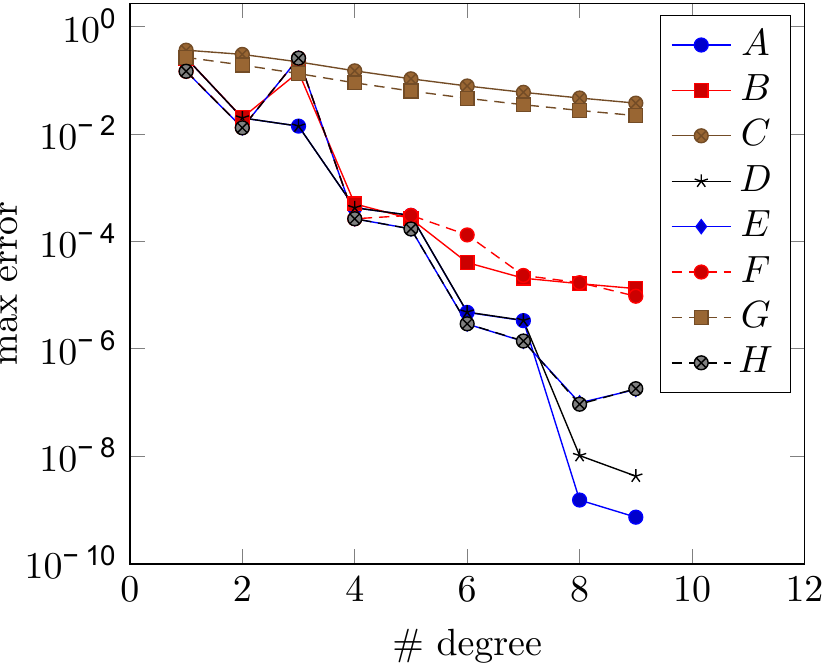}
  \caption{$A$-$D$ parameters $(\alpha, \beta,\gamma)=(1,1,2)$ and $E$-$H$ parameters $(2,2,5)$ 
}
  \label{error}
\end{figure}

As Figure \ref{error} illustrates, the highest rate of convergence is obtained 
by the unfiltered APK expansions. On the other hand, the APK expansions 
filtered by the cosine-filter (C,G) and the exponential filter of second order 
(B,F) show a notably slower rater of convergence.
Yet, by increasing the order of the filter and applying the exponential filter 
of fourth order to the APK expansion (D,H), the rate of convergence 
significantly increases.
Thus, for higher filter orders we get a stronger decrease in the $\max$-error. 
In Table \ref{Fehlerposition} the position and the value of the maximum-errors 
are plotted for $C$ and $G$.
\begin{table}[!htb]
\begin{center}
 \begin{tabular}{c|c|c||c|c }
  $N$ & $(x,y)$ &  $\max$- error ($C$-case )  &    $(x,y)$ &  $\max$- error $(G)$-case  \\ \hline \hline
 1 & (0.331, 0.169) &  0.36338   & (0.331, 0179)  & 0.266869 \\
 2 & (0.269, 0.273) & 0.303294   & (0.198, 0.345)  & 0.192026 \\
 3 & (0.118, 0.435) & 0.219701 & (0.129, 0.442) & 0.132915 \\
 4 & (0.162, 0.391) & 0.149895 &  (0.219, 0.353 ) & 0.089492 \\
 5 & (0.261, 0.293) & 0.106279 & ( 0.284, 0.289 ) & 0.0629229  \\
 6 & (0.263, 0.291) & 0.0781872 & (0.258, 0.316 ) & 0.0460719\\
 7 & (0.197, 0.357) & 0.0595273 & (0.196, 0.378) & 0.0273561\\ 
 8 & (0.220, 0.334) & 0.0466554 & (0.137, 0.437 ) & 0.0219375  \\
\end{tabular} 
\caption{$ C$ and $ G$}\label{Fehlerposition}
\end{center}
\end{table}
For the others it looks analogous.\\
The maximum error lies in  $\mathring \T$ and the error decreases with 
$\frac{1}{N^{1.25}}$. 
For $C$ $K1$ has to be around $0.867428$ and for $G$ $K1$ is $0.52477$.
In Table \eqref{Konstants}, we numerically determined the constants $K1,\; 
K3,\; K4,\; K5$ and $K6$ for the case $D$. 

\begin{table}[!htb]
\begin{center}
 \begin{tabular}{c|c|c|c|c }
  $K1$ & $K3$ & $K4$  &  $K5$ &  $K6$  \\ \hline \hline
  $0.496$ & $0.741 $  & $ 0.975$ &  $ 0.496$ &   $ 0.428$ 
\end{tabular} 
\caption{$ K1-K6$}\label{Konstants}
\end{center}
\end{table}
All of this verifies our result.

\subsection{The Spectral Viscosity Method}\label{SVM}

Since spectral methods are known to lack sufficient dissipation, Tadmor \cite{tadmor1989convergence} proposed the Spectral Viscosity or Super-Spectral Viscosity Method (SV Method). 
The main idea of the SV Method is to add a small viscosity term to the conservation law \eqref{hypGleichung}. We show analogously to 
\cite{ma1998chebyshev, ortleb2011diskontinuierliches} that by choosing a viscosity term which depends on the differential operator of the orthogonal basis, the 
SV Method can be seen as a spectral method with modal filtering. \\ 
For the SD scheme based on the APK expansions, we propose to consider high order operators of the form $(-D)^p$on the reference element $\T$.
Let $I_N$ be an index set and  $\phi_k=A_{m,l}$ the APK polynomials in lexicographic order $k$, see Section \ref{SD Method}.
\begin{thm}\label{Spekralersatz}
 Let $N\in \N$ and $\{\phi_k | k\in I_N \}$ the APK polynomials on the triangle $\T$. The polynomials solve 
the eigenvalue problem 
 \begin{equation}\label{Eigenwertgleichungfilter}
  -D\phi_k=-\lambda_{k} \phi_k, 
  \end{equation}
  where $D$ is the differential operator of the polynomials. 
$P_\phi$ denotes the projection on the space of $\{ \phi_k\}$.  To solve the viscosity equation
\begin{equation}\label{viskoseAPKreihemitfilter}
 \frac{\partial }{\partial t} u_N(\boldsymbol{x},t) +\nabla_{\boldsymbol{x}} \cdot P_\phi f(u_N({\boldsymbol{x}},t))=\varepsilon_N(-1)^{p+1} (-D)^p u_N({\boldsymbol{x}},t)
\end{equation}
by a splitting method is equivalent to multiply the coefficients $\hat{u}_{k}$ with the function 

\begin{equation*}
 \sigma(k)=e^{-\varepsilon_N \Delta t \lambda_{k}^p}
\end{equation*}
in every update step of the equation 
\begin{equation*}
 \frac{\partial }{\partial t} u_N({\boldsymbol{x}},t) +\nabla_{\boldsymbol{x}} \cdot P_{\phi} f(u_N({\boldsymbol{x}},t))=0.
\end{equation*}
\end{thm}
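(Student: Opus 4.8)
The plan is to prove the equivalence by an operator-splitting argument, using the fact that the APK polynomials diagonalize the viscosity operator. First I would split the viscous equation \eqref{viskoseAPKreihemitfilter} into two subproblems that are advanced successively over each time step of length $\Delta t$: the \emph{hyperbolic (advection) step}
\[
 \frac{\partial}{\partial t} u_N(\boldsymbol{x},t) + \nabla_{\boldsymbol{x}} \cdot P_\phi f(u_N(\boldsymbol{x},t)) = 0,
\]
which is exactly the filter-free equation appearing in the statement, and the \emph{viscous step}
\[
 \frac{\partial}{\partial t} u_N(\boldsymbol{x},t) = \varepsilon_N (-1)^{p+1}(-D)^p u_N(\boldsymbol{x},t).
\]
The theorem then reduces to showing that solving the viscous step \emph{exactly} over one interval $\Delta t$ is the same as multiplying each modal coefficient $\hat{u}_k$ by $\sigma(k)=e^{-\varepsilon_N \Delta t \lambda_k^p}$.

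Next I would insert the modal expansion $u_N(\boldsymbol{x},t)=\sum_{k\in I_N}\hat{u}_k(t)\phi_k(\boldsymbol{x})$ into the viscous step and use the eigenvalue relation \eqref{Eigenwertgleichungfilter}. Since each $\phi_k$ satisfies $D\phi_k=\lambda_k\phi_k$, iterating gives $(-D)^p\phi_k=(-\lambda_k)^p\phi_k=(-1)^p\lambda_k^p\phi_k$. Combined with the prefactor $(-1)^{p+1}$, the spatial operator acts on the $k$-th mode as multiplication by $(-1)^{p+1}(-1)^p\lambda_k^p=(-1)^{2p+1}\lambda_k^p=-\lambda_k^p$. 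By linearity of $(-D)^p$ and the orthogonality of the $\phi_k$ (so that coefficients may be compared mode by mode), the viscous PDE decouples into the scalar linear ODEs
\[
 \frac{\d}{\d t}\hat{u}_k(t) = -\varepsilon_N \lambda_k^p\,\hat{u}_k(t), \qquad k\in I_N.
\]

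Each decoupled ODE is then solved in closed form, giving $\hat{u}_k(t+\Delta t)=e^{-\varepsilon_N \Delta t \lambda_k^p}\hat{u}_k(t)=\sigma(k)\hat{u}_k(t)$, which is precisely the asserted modal filter applied after the filter-free update step. Tracking these two ingredients establishes the equivalence.

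The step requiring the most care is the \emph{exact} solution of the viscous substep: it is exact only because the basis $\{\phi_k\}$ diagonalizes $D$, so that the projection $P_\phi$ keeps the iterate in $\operatorname{span}\{\phi_k\mid k\in I_N\}$ and the high-order operator $(-D)^p$ acts there as a diagonal (scalar) multiplier on each mode. I would therefore emphasize that it is essential to apply $(-D)^p$ only to the projected, finite-dimensional approximation $u_N$, and would be careful with the sign bookkeeping in $(-1)^{p+1}(-\lambda_k)^p=-\lambda_k^p$, since it is exactly this cancellation that turns the super-viscosity term into a genuinely damping factor $e^{-\varepsilon_N\Delta t\lambda_k^p}\le 1$ rather than an amplifying one.
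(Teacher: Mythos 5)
Your proposal is correct and follows essentially the same route as the paper's own proof: splitting \eqref{viskoseAPKreihemitfilter} into the advection and viscous substeps, inserting the modal expansion into the viscous substep, using the eigenvalue relation \eqref{Eigenwertgleichungfilter} with exactly the same sign bookkeeping $(-1)^{p+1}(-1)^p\lambda_k^p=-\lambda_k^p$, comparing coefficients to obtain the decoupled ODEs $\frac{\d}{\d t}\hat{u}_k=-\varepsilon_N\lambda_k^p\hat{u}_k$, and solving them exactly to get the multiplier $\sigma(k)=e^{-\varepsilon_N\Delta t\lambda_k^p}$. Your added remarks on orthogonality and the exactness of the viscous substep only make explicit what the paper leaves implicit.
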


\begin{proof}
We solve the equation \eqref{viskoseAPKreihemitfilter} by a splitting method in two steps
 \begin{equation}\label{Splitting1}
  \frac{\partial }{\partial t} u_N({\boldsymbol{x}},t)= \varepsilon_N(-1)^{p+1} (-D)^p u_N({\boldsymbol{x}},t)
 \end{equation}
and 
\begin{equation}\label{Splitting2}
 \frac{\partial }{\partial t} u_N({\boldsymbol{x}},t) + \nabla_{\boldsymbol{x}} \cdot P_\phi f(u_N({\boldsymbol{x}},t)=0
\end{equation}
With $u_N({\boldsymbol{x}},t)=\sum\limits_{k\in I_N} \hat{u}_k \phi_k({\boldsymbol{x}})$ equation \eqref{Splitting1} implies 
\begin{equation*}
  \sum\limits_{k\in I_N} \frac{\partial \hat{u}_k }{\partial t} \phi_k({\boldsymbol{x}})= \sum\limits_{k\in I_N} \varepsilon_N(-1)^{p+1} \hat{u_k} (-D)^p
  \phi_k({\boldsymbol{x}})=\sum\limits_{k\in I_N} -\varepsilon_N \hat{u}_k(t) 
  \lambda_k^p\phi_{k} 
  ({\boldsymbol{x}}),
\end{equation*}
where we applied the eigenvalue equation \eqref{Eigenwertgleichungfilter} in the last step. 
By comparing 
the coefficients we have to solve the ordinary differential equations 
\begin{equation*}
 \frac{\partial \hat{u}_k(t)}{\partial t}= -\varepsilon_N \hat{u_k}(t) \lambda_k^p, \quad \forall k\in I_N.
\end{equation*}
The solution is  $\hat{u}_k(t)=c e^{-\varepsilon_N \lambda_k^p t}$, $c\in \R$. 
With $\Delta t:=t^{n+1}-t^n$ and the requirement $\hat{u}_k(t^{n+1})=\hat{u}_k(t^n)$  for $\Delta t=0$ follows
\begin{equation*}
 \hat{u}_k(t^{n+1})=e^{-\varepsilon_N\lambda_k^p(\Delta t+t^n)}=\underbrace{e^{(-\varepsilon_N \lambda_k^p \Delta t)}}_{=:\sigma(k)} \hat{u}_k(t^n).
\end{equation*}
\end{proof}
In order of us to speak of a modal filter for $\sigma(k)=\sigma((l,m))=e^{-\varepsilon_N(l+m)^p(l+m+\gamma)^p \Delta t}$, we have to multiply the exponenet with  $N^{2p}$, meaning
\begin{equation}\label{Filterapproximiert}
 \sigma \left( \frac{l+m}{N} \right)= e^{-\varepsilon_N N^{2p} \left( \frac{l+m}{N}  \right)^{p} \left( \frac{l+m+\gamma}{N}  \right)^{p} \Delta t} 
                             \approx e^{-\varepsilon_N N^{2p} \Delta t \left( \frac{l+m}{N}  \right)^{2p} }.
\end{equation}
Hence  $\sigma:[0,1]\to [0,1]$ can be seen as an exponential filter of order $2p$ with filter strength 
$\alpha_i:= -\varepsilon_N N^{2p} \Delta t$. 
We consider \eqref{Filterapproximiert} in detail and realize that our modal filter depends on the parameter $\gamma$ of the APK polynomials, especially
if the polynomials have  minor degree. So for different families of APK polynomials we get various specific modal filters. \\  
We can prove similar results to  Theorem \ref{Spekralersatz} for every orthogonal basis $\{\phi_k \}$, if the $\phi_k$ fulfil a comparable eigenvalue equation.\\
For the  transfer of the SV Method to our Spectral Difference Method we follow the steps according 
to \cite{wirz2012spektrale}. The transformation $T_i$ from $\tau_i$ to $\T$ has no effect on the flux function, so that the SD update scheme of a scalar hyperbolic equation in the 
cell $\tau_i$ with viscosity term and  the solution points $\xi_j\in \T$ reads like 
\begin{equation}\label{SDaktualisierungschema}
\begin{aligned}
 &\frac{\partial }{\partial t } u_N(T_i^{-1}({\boldsymbol{\xi}}_j,t))+\nabla_{\boldsymbol{\xi}} \cdot \tilde{P}_N (J_{T_i})^T \tilde{F}(u_N(T_i^{-1}(\boldsymbol{\xi}_j),t))\\
 =&\varepsilon_N(-1)^{p+1}(-D)^p(u_N(T_i^{-1}(\boldsymbol{\xi}_j),t)),
\end{aligned}
\end{equation}
where $\tilde{F}$ is the flux function, $\tilde{P}$ is the projection and $\nabla_{\boldsymbol{\xi}}$ is the nabla operator on $\T$.
The same approach as 
in Theorem  \ref{Spekralersatz} yields  the equivalent of \eqref{SDaktualisierungschema} and the SD Method with the modal filter \eqref{Filterapproximiert}.\\
  Hence, the matrix representation (\ref{schemeMatrix2}) for the SD update scheme in cell $\tau_i$ becomes
  \begin{align}
    \frac{\d \mathbf{u}^s}{\d t}(t) = 
    & - ( \xi_x D_\xi + \xi_y D_\eta ) \bigl( \sigma(k) \hat{F}_{k,1}(t) \bigr)_k \nonumber \\
    & - ( \eta_x D_\xi + \eta_y D_\eta ) \bigl( \sigma(k) \hat{F}_{k,2}(t) \bigr)_k \nonumber \\
    = & - ( \xi_x D_\xi + \xi_y D_\eta ) M_\sigma \hat{\mathbf{F}}_1(t) \nonumber \\
    & -( \eta_x D_\xi + \eta_y D_\eta ) M_\sigma \hat{\mathbf{F}}_2(t) \nonumber \\
    = & - ( \xi_x D_\xi + \xi_y D_\eta ) M_\sigma \mathcal{V}^{-1} \mathbf{F}_1(t) \\
    & - ( \eta_x D_\xi + \eta_y D_\eta ) M_\sigma \mathcal{V}^{-1} \mathbf{F}_2(t) \nonumber
  \end{align}
  with viscosity, where $M_\sigma := \diag{\sigma(1),\dots,\sigma(N_F)}$ is a diagonal 
matrix. 
So we have to choose the size of  $\varepsilon_N$, thereby we stick on 
the analysis of \cite{gottlieb2001spectral}. The authors suggested in the Fourier case a viscosity strength $\varepsilon_N$, which depends on the approximation order as 
\begin{equation*}
 \varepsilon_N\sim \frac{C_P}{N^{2p-1}},
\end{equation*}
where the constant $C_P$ may be chosen by 
\begin{equation*}
 C_p\leq \sum\limits_{k=1}^p||\partial_u^k (J_{T_i})^T \tilde{F}(u) ||_{\L^\infty}= ||(J_{T_i})^T||_{\infty} \sum\limits_{k=1}^p||\partial_u^k  \tilde{F}(u)
  ||_{\L^\infty}.
\end{equation*}
Here, we ignore the dependence on $u$ and approximate $||(J_{T_i})^T||_{\infty} \sim \frac{1}{h_i}$ by a measure of length $h_i$ of the triangle $\tau_i$. Finally we arrive 
at 
\begin{equation}\label{viscosityStrength}
 \varepsilon_N^i:=\frac{c}{h_iN^{2p-1}}< \frac{\sum\limits_{k=1}^p||\partial_u^k  \tilde{F}(u)
  ||_{\L^\infty}}{h_iN^{2p-1}},
\end{equation}
where $c\in \R$ is a constant.
We have  to select the constant $c$ and the filter order $2p$ in our numerical tests. 
\begin{rem}
The numerical tests 
from \cite{meister2013extended, wirz2012spektrale} imply 
that we have to increase $c$ if we enlarge the filter order $2p$.\\
Furthermore, it is a known fact that 
the application of a modal filter in the global domain will destroy the order of convergence. 
Shock indicators 
are used to detect corrupted cells. There 
are different approaches to detect jumps in a cell, see \cite{offner2013detecting, wirz2014detecting} for example. 
Here we use 
the shock indicator of \cite{barter2007shock}. The indicator compares the 
higher 
coefficients with the lower ones to detect oscillations. 
\end{rem}

  In general, the time step $\Delta t$ in the explicit time-stepping 
  scheme\footnote{We already mentioned, that we use the 4-th order 
  low storage Runge-Kutta scheme defined by Carpenter and Kennedy
  (see \cite{carpenter1994fourth}) in our implementation for Section
  \ref{Numerical Test}.} for the resulting ODEs (in $t$) at each solution
  point ${\bf x}_j$, which also appears in the filter strength 
  $\alpha_i = -\varepsilon_N N^{2p} \Delta t$ in (\ref{Filterapproximiert}), 
  can't be chosen arbitrarily. For a scalar conservation law $u_t(x,t) + \lambda u_x(x,t) = 0$ 
  in one space dimension and a numerical scheme on a uniform grid 
  with length $h$ one should choose $\Delta t$ small enough, so that 
  a wave with propagation speed $\lambda$ can't travel more than $h$ in 
  one time step. I.e. the \textit{Courant-Friedrichs-Levy condition} $\lambda \Delta t \leq h$ has to hold. 
  The maximum 
  \begin{align*}
    \underset{\lambda}{\max} \; \frac{\lambda \Delta t}{h} =: \mathrm{CFL}
  \end{align*}
  is called \textit{Courant number} and the following example gives a first intuition how it can determined on a triangulation.
  \begin{ex}\label{bspl}
    We observe the two dimensional advection equation 
    \begin{align*}
      u_t(x,y,t) + u_x(x,y,t) + u_y(x,y,t) = 0.
    \end{align*}
    Here the propagation speed in $(1,1)$ direction is $\sqrt{2}$, 
    which leads to the Courant number $\mathrm{CFL} = \frac{\sqrt{2} \Delta t}{h}$, 
    in which $h$ is now a measure of length of the observed triangle. 
  \end{ex} 
  As we can see in Section \ref{Stability}.2 
  the SD Method has stability problems 
  for all parameters $\alpha,\beta$ and $\gamma$, 
  which are increasing for rising order $N$. 
  Hence, we can't even compute a sufficiently numerical Courant number and will choose the time step
  \begin{align}\label{timestep}
    \Delta t := \frac{C_{fix}}{(N+1)^2} \cdot \frac{h}{\lambda_{max}}
  \end{align}
  with a fixed value $C_{fix}$ and maximal propagation speed $\lambda_{max}$.
  Therefore, the time step adapts an increasing instability for rising $N$, 
  the "geometry" of the triangulation by $h$ and by $C_{fix} = \frac{1}{2}$, which will be our choice,
  (\ref{timestep}) coincides with the Courant number in example \ref{bspl} for $N=0$.

\section{Stability of the Spectral Difference Method}\label{Stability}

By von Neumann analysis in matrix form for a scalar advection equation in two spatial dimensions with periodic initial condition, one can observe 
the linear stability of the semi-discretization. This was already done by Abeele, Lacor and Wang for the 
classical 
SD Method in \cite{van2008stability}, 
by Huynh for the FR Method in \cite{huynh2007flux} and many other authors. Here we observe the SD Method for APK polynomials
on a triangular grid, which is generated by a pattern like
\begin{figure}[!htb]
  \centering
  \begin{subfigure}[b]{0.7\textwidth}
    \includegraphics[width=\textwidth]{%
    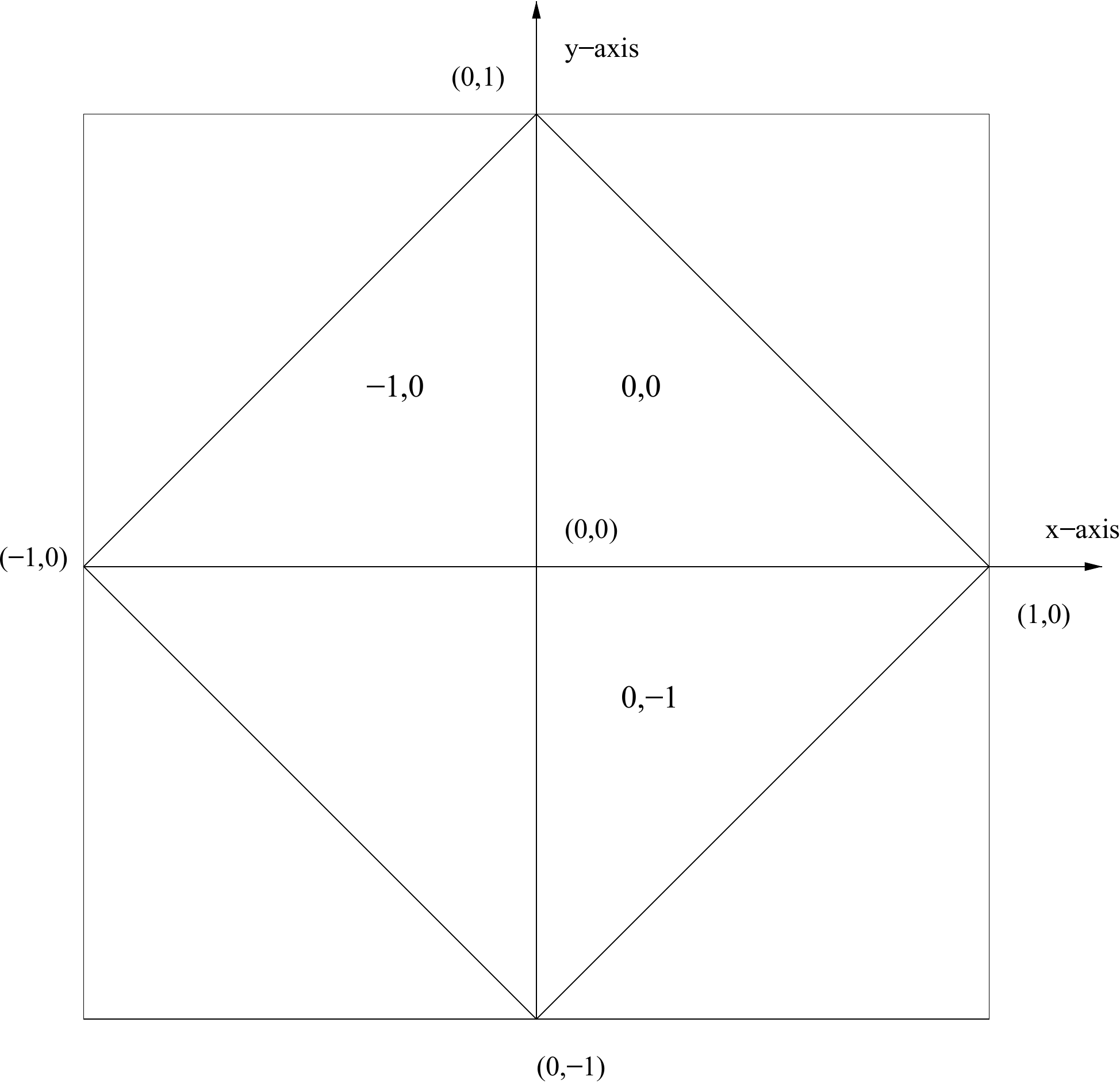} 
  \end{subfigure}
  \caption{Generating pattern}
  \label{grid}
\end{figure}
in Figure \ref{grid} for a scalar advection equation   
\begin{align*}
  u_t+ \cos(\psi) u_x + \sin(\psi) u_y = 0
\end{align*}
with $\psi \in [0,\frac{\pi}{2}]$ and the periodic initial condition
\begin{align*}
  u_{init}(x,y) = e^{I(w_x x + w_y y)},
\end{align*}
where $w_x,w_y\in [-\pi,\pi]$ are the so called \textit{wave numbers} in $x$- and $y$-direction. The exact solution is then given by
\begin{align}\label{exactsolution}
  u_{exact}(x,y,t) = e^{ I( w_x x + w_y y - [ w_x \cos \psi + w_y \sin \psi ]t ) }
\end{align}
for $t \geq 0$. We will concentrate on the semi discretization at $t=0$ and wish to obtain a relation in form of a linear ODE system
\begin{align}\label{ODE}
  \frac{\d{\bf u}^s}{\d t} = S \bf{u}^s
\end{align}
of first order and with constant coefficients in the solution points. 
Similar to the scalar case $\frac{\d u}{\d t} = \lambda u$, 
where the solution is given by $u(t) = ce^{\lambda t}$ for $c \in \mathbb{R}$, 
the eigenvalues of the matrix $S$ take the place of $\lambda$. Hence the asymptotic 
behavior of the solution of (\ref{ODE}) depends on the real parts of the eigenvalues: 
If all eigenvalues of $S$ lie in the left half of the complex plane, the solution is 
bounded, which coincides with the behavior of the exact solution $u_{exact}$. In this 
case we call the semi discretization to be \textit{stable}. On the other hand, if any 
eigenvalue lies in the right half of the complex plane, the solution of (\ref{ODE}) will 
blow up for $t \to \infty$ and so might the scheme. This will happen unless the \textit{instability}
is mild, i.e. $\underset{\lambda}{\max} \: \mathrm{Re}(\lambda)$ is small, and the time steps 
adapt to
this and are sufficiently small. In $5.1$ we will first derive a matrix representation 
like (\ref{ODE}) for the SD scheme at $t=0$ and for the test case above. In $5.2$ we will
 analyze the stability of the unfiltered SD scheme from Section \ref{SD Method} for 
certain ranges for $\alpha, \beta$ and $\gamma$. By observation of the numerical test cases, 
one can see that some stabilization is necessary and we did so in Section \ref{Filter} by the
SV Method in terms of the SD Method with modal filtering. Following this, in $5.3$ we will do 
a similar analysis to $5.2$ for the SD Method with modal filtering. One can see how the modal 
filtering leads to much better stability properties and how they depend on certain parameters 
like the order $p$ and $\alpha,\beta,\gamma$ from the APK polynomials.

\subsection{Matrix representation}

We already stated in Section \ref{SD Method} that the universal update scheme (\ref{update}) at the cell $\tau_{i,j}$ reads
\begin{align}\label{matrix1}
  \frac{ \d {\bf u}_{i,j}^s(t)}{ \d t} = - (\xi_x D_\xi + \xi_y D_\eta)\mathcal{V}^{-1} {\bf 
F}_1(t) - (\eta_x D_\xi + \eta_y D_\eta)\mathcal{V}^{-1} {\bf F}_2(t),
\end{align}
when evaluated at the solution points. If we look at the cell $\tau_{0,0}$ and denote 
$\frac{ \d {\bf u}_{i,j}^s(t=0)}{ \d t}$ for sake
of simplicity by $\frac{ \d {\bf u}^s}{ \d t}$ equation (\ref{matrix1}) becomes
\begin{align}\label{matrix2}
  \frac{ \d {\bf u}^s}{ \d t} = - ( D_\xi \mathcal{V}^{-1} {\bf F}_1 + D_\eta 
\mathcal{V}^{-1} {\bf F}_2),
\end{align}
since $\tau_{0,0}$ and its neighbors are parametrized by the orientation-preserving affine transformations
\begin{displaymath}
  T_{0,0}(x,y)=\begin{pmatrix} x \\ y \end{pmatrix}, 
  T_{-1,0}(x,y)=\begin{pmatrix} y \\ -x \end{pmatrix},
  T_{0,-1}(x,y)=\begin{pmatrix} -y \\ x \end{pmatrix},
  T_{0,1}(x,y)=\begin{pmatrix} 1-x \\ 1-y \end{pmatrix}.
\end{displaymath}
Note that ${\bf F}_1,{\bf F}_2$ are given by (\ref{numerical flux}), where the upwind flux is used and the flow direction was restricted to $\psi \in [0,\frac{\pi}{2}]$. 
Consequently ${\bf F}_1,{\bf F}_2$ are independent of the values from $\tau_{0,1}$ and so (\ref{matrix2}) now reads
\begin{align*}
  \frac{\d {\bf u}^s}{\d t} = & - \cos(\psi)D_\xi \mathcal{V}^{-1} \bigl[ M_1^{-1,0} 
{\bf u}^f_{-1,0} + M_1^{0,0} {\bf u}^f_{0,0} + M_1^{0,-1} {\bf u}^f_{0,-1} \bigr] \\
  & - \sin(\psi)D_\eta \mathcal{V}^{-1} \bigl[ M_2^{-1,0} {\bf u}^f_{-1,0} + M_2^{0,0} {\bf u}^f_{0,0} + M_2^{0,-1} {\bf u}^f_{0,-1} \bigr].
\end{align*}
Here the $M_{i}^{m,n}$ for the second order SD Method, and so $K_F=\frac{(2+1)(2+2)}{2}=6$, are
\begin{align*}
  M_1^{-1,0}=e_1^Te_1+e_2^Te_2+e_3^Te_3, \; M_1^{0,0}=e_5^Te_5+e_6^Te_6, \; M_1^{0,-1}=e_4^Te_4 , \\
  M_2^{-1,0}=e_2^Te_2, \; M_2^{0,0}=e_3^Te_3+e_5^Te_5, \; M_2^{0,-1}=e_1^Te_1+e_4^Te_4+e_6^Te_6. 
\end{align*}
For the exact computation see \cite[chapter $3.3.1$]{wirz2012spektrale}  and the $M_{i}^{m,n}$ for $N=3,4,5$ can be found in the appendix. 
Next we can make use of the periodicity of the initial condition to get rid of ${\bf u}^f_{-1,0}$ and ${\bf u}^f_{0,-1}$ by substituting 
them by terms $T_{-1,0}{\bf u}^f_{0,0}$ and $T_{0,-1}{\bf u}^f_{0,0}$. Therefore, note that ${\bf u}^f_{-1,0},{\bf u}^f_{0,0}$ and ${\bf u}^f_{0,-1}$
are given by the initial condition. Hence
\begin{align*}
  {\bf u}^f_{0,0} = & (u_{init}(T^{-1}_{0,0}(\xi_k,\eta_k)))_k = (u_{init}(\xi_k,\eta_k))_k \\
  = & (e^{I(w_x\xi_k+w_y\eta_k)})_k, \\
  {\bf u}^f_{-1,0} = & (u_{init}(T^{-1}_{-1,0}(\xi_k,\eta_k)))_k = (u_{init}(-\eta_k,\xi_k))_k \\
  = & (e^{I(-w_x\eta_k+w_y\xi_k)})_k = ( e^{I(-w_x(\xi_k+\eta_k)+w_y(\xi_k-\eta_k))} \cdot e^{I(w_x\xi_k+w_y\eta_k)} )_k \\
  = & T_{-1,0} {\bf u}^f_{0,0}, \\
  {\bf u}^f_{0,-1} = & (u_{init}(T^{-1}_{0,-1}(\xi_k,\eta_k)))_k = (u_{init}(\eta_k,-\xi_k))_k \\
  = & (e^{I(w_x\eta_k-w_y\xi_k)})_k = ( e^{I(w_x(\eta_k-\xi_k)-w_y(\xi_k+\eta_k))} \cdot e^{I(w_x\xi_k+w_y\eta_k)} )_k \\
  = & T_{0,-1} {\bf u}^f_{0,0}
\end{align*} with \begin{align*}
  T_{-1,0} & := \sum_k e^{I(-w_x(\xi_k+\eta_k)+w_y(\xi_k-\eta_k))} e_k^Te_k, \\
  T_{0,-1} & := \sum_k e^{I(w_x(\eta_k-\xi_k)-w_y(\xi_k+\eta_k))} e_k^Te_k,
\end{align*}
and we obtain
\begin{equation}
  \begin{aligned}\label{matrix3}
    \frac{\d {\bf u}^s}{\d t} = - & \Bigl( \cos(\psi)D_\xi \mathcal{V}^{-1} \bigl[ M_1^{-1,0} 
T_{-1,0} + M_1^{0,0} + M_1^{0,-1} T_{0,-1} \bigr] \\
    & + \sin(\psi)D_\eta \mathcal{V}^{-1} \bigl[ M_2^{-1,0} T_{-1,0} + M_2^{0,0} + M_2^{0,-1} T_{0,-1} \bigr] \Bigr) {\bf u}^f_{0,0}.
  \end{aligned}
\end{equation} 
Last, we want to write ${\bf u}^f_{0,0}$ in terms of ${\bf u}^s_{0,0}$ or just ${\bf u}^s$ to obtain a
relation as in (\ref{ODE}). While we reconstructed the flux $F$ by APK polynomials, 
in order to later use their natural filter given by (\ref{Filterapproximiert}), we reconstruct the solution $u$ by Lagrange polynomials. Hence
\begin{align*}
   {\bf u}_{0,0}^f = 
   & \bigl( \sum_{k=1}^{K_s} u_{0,0}({\bf x}^s_k) L_k(T_{0,0}({\bf x}_j^f)) \bigr)_j^{K_F} \\
   = & E_{Lag} {\bf u}^s 
\end{align*} with \begin{align*}
  E_{Lag} := \bigl( L_k({\bf x}_j^f) \bigr)_{j,k} \in \mathbb{R}^{K_F\times K_s}.
\end{align*} If we use that in (\ref{matrix3}) we finally obtain 
\begin{align*}
  \frac{\d {\bf u}^s}{\d t} = - & \Bigl( \cos(\psi)D_\xi \mathcal{V}^{-1} \bigl[ M_1^{-1,0} 
T_{-1,0} + M_1^{0,0} + M_1^{0,-1} T_{0,-1} \bigr] \\
    & + \sin(\psi)D_\eta \mathcal{V}^{-1} \bigl[ M_2^{-1,0} T_{-1,0} + M_2^{0,0} + M_2^{0,-1} T_{0,-1} \bigr] \Bigr) E_{Lag} {\bf u}^s,
\end{align*}
which we want to retain in the following Lemma.
\begin{lem}\label{lemma}
  For a linear advection equation with flow direction $\psi \in [0,\frac{\pi}{2}]$ and periodic initial condition $u_{init}$ on a 
  triangular grid like in Figure \ref{grid}, where the fully upwind is used, 
the universal update scheme (\ref{update}) on $\tau_{0,0}$ reads
  \begin{displaymath}
    \frac{\d {\bf u}^s}{\d t} = S {\bf u}^s
  \end{displaymath}
  with semi-discretization $S$ given by
  \begin{equation}
    \begin{aligned}\label{semidiscr}
      S := - & \Bigl( \cos(\psi)D_\xi \mathcal{V}^{-1} \bigl[ M_1^{-1,0} T_{-1,0} + M_1^{0,0} + M_1^{0,-1} T_{0,-1} \bigr] \\
      & + \sin(\psi)D_\eta \mathcal{V}^{-1} \bigl[ M_2^{-1,0} T_{-1,0} + M_2^{0,0} + M_2^{0,-1} T_{0,-1} \bigr] \Bigr) E_{Lag}.
    \end{aligned}  
  \end{equation}
\end{lem}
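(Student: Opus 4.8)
The plan is to carry out explicitly the chain of substitutions that turns the general update scheme into a single matrix acting on the solution-point vector. I would begin from the universal update scheme in matrix form, equation (\ref{matrix1}), evaluated at the solution points of the reference cell $\tau_{0,0}$. Since the affine map $T_{0,0}$ is the identity, its Jacobian is the identity matrix, so the geometric coefficients satisfy $\xi_x=\eta_y=1$ and $\xi_y=\eta_x=0$; the four terms in (\ref{matrix1}) then collapse to the compact form (\ref{matrix2}), namely $\frac{\d {\bf u}^s}{\d t} = -(D_\xi \mathcal{V}^{-1}{\bf F}_1 + D_\eta \mathcal{V}^{-1}{\bf F}_2)$.

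Next I would open up the flux vectors ${\bf F}_1,{\bf F}_2$ using the definition of the numerical flux (\ref{numerical flux}) together with the upwind prescription. Because the flow direction is restricted to $\psi\in[0,\frac{\pi}{2}]$, both $\cos\psi$ and $\sin\psi$ are nonnegative, so the upwind flux at every edge point draws only on the cells $\tau_{-1,0}$, $\tau_{0,0}$ and $\tau_{0,-1}$ and is independent of $\tau_{0,1}$. The incidence matrices $M_i^{m,n}$ record, edge by edge and component by component, which flux points receive their value from which neighbour; writing them out (as quoted for the second-order case, and analogously for higher $N$) lets me split each ${\bf F}_\nu$ into a sum over the three contributing cells and factor $\cos\psi$ and $\sin\psi$ out front.

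The essential step, and the one I expect to be the main obstacle, is to eliminate the neighbour flux-point vectors ${\bf u}^f_{-1,0}$ and ${\bf u}^f_{0,-1}$ in favour of ${\bf u}^f_{0,0}$ by exploiting the periodicity of the initial datum $u_{init}(x,y)=e^{I(w_x x+w_y y)}$. Here I would compose each neighbour parametrisation $T_{m,n}^{-1}$ with the exponential and then factor out the common phase $e^{I(w_x\xi_k+w_y\eta_k)}$; for instance $T_{-1,0}^{-1}(\xi_k,\eta_k)=(-\eta_k,\xi_k)$ gives $u_{init}(-\eta_k,\xi_k)=e^{I(-w_x(\xi_k+\eta_k)+w_y(\xi_k-\eta_k))}\,e^{I(w_x\xi_k+w_y\eta_k)}$, so that ${\bf u}^f_{-1,0}=T_{-1,0}{\bf u}^f_{0,0}$ with $T_{-1,0}$ the diagonal phase-shift matrix displayed in the text, and similarly for $T_{0,-1}$. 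The delicate point is that the generating pattern of Figure \ref{grid} makes the neighbour maps rotations rather than plain translations, so one must check that the resulting phase factors are genuinely pointwise (diagonal) and consistent with the periodicity; getting the signs of the entries of $T_{-1,0}$ and $T_{0,-1}$ right is the only real calculation. Substituting these relations yields the intermediate form (\ref{matrix3}), in which the whole right-hand side already acts on ${\bf u}^f_{0,0}$.

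Finally I would convert ${\bf u}^f_{0,0}$ into the solution-point unknown ${\bf u}^s$. Since the solution is reconstructed by Lagrange polynomials while the flux is reconstructed in the APK basis, evaluating the Lagrange reconstruction at the flux points gives ${\bf u}^f_{0,0}=E_{Lag}{\bf u}^s$ with $E_{Lag}=(L_k({\bf x}^f_j))_{j,k}$. Inserting this into (\ref{matrix3}) and collecting every factor into a single matrix produces exactly $\frac{\d {\bf u}^s}{\d t}=S{\bf u}^s$ with $S$ as in (\ref{semidiscr}), which completes the argument.
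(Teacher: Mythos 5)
Your proposal is correct and follows essentially the same route as the paper: collapsing (\ref{matrix1}) to (\ref{matrix2}) via the identity map $T_{0,0}$, splitting the upwind fluxes over the three contributing neighbours with the incidence matrices $M_i^{m,n}$, converting ${\bf u}^f_{-1,0}$ and ${\bf u}^f_{0,-1}$ into diagonal phase-shift matrices $T_{-1,0}$, $T_{0,-1}$ acting on ${\bf u}^f_{0,0}$ via the periodic exponential datum, and finally inserting ${\bf u}^f_{0,0}=E_{Lag}{\bf u}^s$. The phase factorisation you single out as the delicate step (including the signs coming from the rotational neighbour maps) is exactly the computation the paper carries out, and your formulas agree with it.
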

We already stated in Section \ref{Filter} that the universal update scheme (\ref{update}) with modal filtering at the cell $\tau_{i,j}$ reads
\begin{equation}
  \begin{aligned}\label{matrix4}
    \frac{\d {\bf u}_{i,j}^s(t)}{\d t} = & - (\xi_x D_\xi + \xi_y D_\eta) M_\sigma 
\mathcal{V}^{-1} {\bf F}_1(t) \\
    & - (\eta_x D_\xi + \eta_y D_\eta) M_\sigma \mathcal{V}^{-1} {\bf F}_2(t),
  \end{aligned}
\end{equation}
when evaluated at the solution points. If we combine this with Lemma \ref{lemma}, we obtain a similar corollary for the the filtered SD Method.
\begin{cor}
  For a linear advection equation with flow direction $\psi \in [0,\frac{\pi}{2}]$ and periodic initial 
  condition $u_{init}$ on a triangular grid like in Figure \ref{grid}, where 
the full upwind is used, the universal update 
  scheme with modal filtering on $\tau_{0,0}$ reads
  \begin{displaymath}
    \frac{\d {\bf u}^s}{\d t} = S_\sigma {\bf u}^s
  \end{displaymath}
  with semi discretization $S_\sigma$ given by
  \begin{equation}
    \begin{aligned}\label{semidiscr2}
      S_\sigma := & - \Bigl( \cos(\psi)D_\xi M_\sigma \mathcal{V}^{-1} \bigl[ M_1^{-1,0} T_{-1,0} + M_1^{0,0} + M_1^{0,-1} T_{0,-1} \bigr] \\
      & + \sin(\psi)D_\eta M_\sigma \mathcal{V}^{-1} \bigl[ M_2^{-1,0} T_{-1,0} + M_2^{0,0} + M_2^{0,-1} T_{0,-1} \bigr] \Bigr) E_{Lag}.
    \end{aligned}  
  \end{equation}
\end{cor}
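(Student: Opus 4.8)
The plan is to mirror the derivation of Lemma \ref{lemma} essentially verbatim, the only bookkeeping difference being that the diagonal filter matrix $M_\sigma$ now rides along to the left of $\mathcal{V}^{-1}$. First I would specialize the filtered update scheme (\ref{matrix4}) to the central cell $\tau_{0,0}$. Since $T_{0,0}(x,y) = (x,y)^T$ its Jacobian is the identity, so $\xi_x = \eta_y = 1$ and $\xi_y = \eta_x = 0$; consequently $\xi_x D_\xi + \xi_y D_\eta = D_\xi$ and $\eta_x D_\xi + \eta_y D_\eta = D_\eta$, and (\ref{matrix4}) collapses to
\begin{equation*}
  \frac{\d {\bf u}^s}{\d t} = -\bigl( D_\xi M_\sigma \mathcal{V}^{-1} {\bf F}_1 + D_\eta M_\sigma \mathcal{V}^{-1} {\bf F}_2 \bigr),
\end{equation*}
which is exactly (\ref{matrix2}) with the extra factor $M_\sigma$ inserted after each differentiation matrix.

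Next I would insert the flux splitting already established in the proof of Lemma \ref{lemma}. Because the upwind flux together with the restriction $\psi \in [0,\frac{\pi}{2}]$ makes ${\bf F}_1,{\bf F}_2$ independent of the contributions from $\tau_{0,1}$, one may write ${\bf F}_1 = \cos(\psi)[M_1^{-1,0}{\bf u}^f_{-1,0} + M_1^{0,0}{\bf u}^f_{0,0} + M_1^{0,-1}{\bf u}^f_{0,-1}]$ and analogously for ${\bf F}_2$ with $\sin(\psi)$ and the matrices $M_2^{m,n}$. These substitutions act entirely on ${\bf F}_1,{\bf F}_2$, i.e.\ to the right of $\mathcal{V}^{-1}$, so the newly inserted $M_\sigma$ plays no role and the manipulations carry over unchanged.

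I would then invoke periodicity of $u_{init}$ exactly as before to eliminate the neighbour contributions, replacing ${\bf u}^f_{-1,0}$ by $T_{-1,0}{\bf u}^f_{0,0}$ and ${\bf u}^f_{0,-1}$ by $T_{0,-1}{\bf u}^f_{0,0}$ with the diagonal shift matrices $T_{-1,0},T_{0,-1}$, and finally express the flux-point values through the solution-point values via the Lagrange extension ${\bf u}^f_{0,0} = E_{Lag}{\bf u}^s$. Collecting all factors produces $\frac{\d {\bf u}^s}{\d t} = S_\sigma {\bf u}^s$ with $S_\sigma$ as in (\ref{semidiscr2}).

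I do not expect a genuine obstacle here: the statement is a corollary precisely because every step reproduces the proof of Lemma \ref{lemma}. The only point deserving a line of care is the positional bookkeeping of $M_\sigma$ — one must check that it stays sandwiched as $D_\bullet\, M_\sigma\, \mathcal{V}^{-1}$ throughout and is never accidentally moved past $\mathcal{V}^{-1}$ onto the flux data. Since the filter is applied to the modal flux coefficients $\mathcal{V}^{-1}{\bf F}_\nu$ while all substitutions modify only ${\bf F}_\nu$, this is immediate, and $S_\sigma$ is obtained from $S$ simply by inserting $M_\sigma$ after each of $D_\xi$ and $D_\eta$.
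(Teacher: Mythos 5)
Your proposal is correct and follows essentially the same route as the paper, which obtains the corollary simply by combining the filtered update scheme (\ref{matrix4}) with the derivation of Lemma \ref{lemma}. Your extra care in noting that $M_\sigma$ stays sandwiched as $D_\bullet M_\sigma \mathcal{V}^{-1}$ while all substitutions act only on ${\bf F}_\nu$ is exactly the point that makes the argument carry over unchanged.
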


\subsection{Stability Analysis for the SD Method}\label{SDMOF}

We already stated that the asymptotic behavior of the solution of (\ref{ODE}) depends 
on the real parts of the eigenvalues of $S\equiv S(\alpha,\beta,\gamma)$ given by
(\ref{semidiscr}). For linear stability, we wish them to lie in the left half of the complex plane.
In case there are no such parameters $\alpha,\beta \in \mathbb{R}^+$ and $\gamma > \alpha +\beta -1$, we want 
at least to find a set of parameters $(\alpha,\beta,\gamma)$ with preferably small real parts.
If we denote the maximal real part over all eigenvalues corresponding to a certain
parameter set $(\alpha,\beta,\gamma)$ and a fixed set of test cases with respect to $w_x,w_y,\psi$ 
by $\Lambda \equiv \Lambda(\alpha,\beta,\gamma)$, this leads to the optimization problem
\begin{align}\label{OP}
  \underset{ \alpha,\beta \in \mathbb{R}^+ , \gamma > \alpha + \beta-1 }{\text{arg min} } \; \Lambda(\alpha,\beta,\gamma).
\end{align}
Note that this is a continuous and non-convex problem. Future studies could give a strict
treatment of the optimization problem (\ref{OP}), but since we just want to demonstrate
the influence of the chosen polynomial basis on the stability of the SD Method with and
without modal filtering, we will just  concentrate on certain subranges for $\alpha,\beta$ and $\gamma$. 
In the following, we will concentrate on the subrange of
  \begin{align*}
    P := \{ (\alpha,\beta,\gamma) \ | \ \alpha,\beta = 0.1,0.2,\dots,2 \text{ and } \gamma=\alpha+\beta,\alpha+\beta+0.1,\dots,6 \}
  \end{align*}
due to the observation that higher parameters lead to much worse condition numbers, see \ref{Konditionszahl}.
For a fixed parameter set $(\alpha,\beta,\gamma)$ in such a subrange we will compute all eigenvalues
of $S$ and look for the one with the greatest real part. This will be done for several test 
cases with respect to the flow direction $\psi \in [0,\frac{\pi}{2}]$ and the wave 
numbers $w_x,w_y \in [-\pi,\pi]$. Then we will again look after the greatest real part among 
all these test cases. The resulting eigenvalue and its real part will be taken as an evidence
for the stability of the underlying parameter set $(\alpha,\beta,\gamma)$ and its corresponding
polynomial basis. All of this is done in MatLab, where a descriptive pseudo code is given by 
Algorithm \ref{OF}.
\begin{algorithm}
\caption{without filtering}\label{OF}
\begin{algorithmic}[1]
\For{$\alpha,\beta=0.1:0.1:2, \; \gamma=\alpha+\beta:0.1:6$} 
  \State{$L<<0$ ( e.g. $-42$ )} 
  \For{$\psi=0:\frac{\pi}{8}:\frac{\pi}{2}, \; w_x,w_y=-\pi:\frac{\pi}{2}:\pi$} \Comment{test cases}
    \State compute S
    \State v = eig(S)
    \State v$_{re}$ = real(v)
    \State $\Lambda$ = max(v$_{re}$)
    \If {$\Lambda > L$} 
      \State $L=\Lambda$
    \EndIf
  \EndFor
  \State \Return $(\alpha,\beta,\gamma,L)$ \Comment{L is the greatest real part}
\EndFor
\end{algorithmic}
\end{algorithm}
The numerical results show that the maximal real part of all eigenvalues $L$ is
independent of $\alpha,\beta$ and $\gamma$. Hence the linear stability is independent of the 
corresponding basis of APK polynomials, which coincides with the following Theorem.

\begin{thm}
  In Lemma \ref{lemma} the eigenvalues of the semi-discretization $S$, given by (\ref{semidiscr}), 
  are independent of the basis of APK polynomials corresponding to $\alpha,\beta$ and $\gamma$.
\end{thm}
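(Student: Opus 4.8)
The plan is to show that the apparent dependence of the semi-discretization $S$ in (\ref{semidiscr}) on $\alpha,\beta,\gamma$ is illusory: every factor is in fact basis-independent, so that $S$ itself — and \emph{a fortiori} its spectrum — is unchanged when the APK family is varied. First I would isolate where the APK basis can possibly enter. The selection matrices $M_i^{m,n}$ are built only from the unit vectors $e_k$ and the geometric incidence of flux points with the edges; the diagonal matrices $T_{-1,0}, T_{0,-1}$ depend only on the wave numbers $w_x,w_y$ and the fixed coordinates of the flux points; and $E_{Lag}$ is assembled from the Lagrange polynomials at the solution and flux points. None of these involves $\alpha,\beta,\gamma$. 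The only genuinely basis-dependent objects are the differentiation matrices $D_\xi, D_\eta$ and the Vandermonde matrix $\mathcal{V}$, and they appear exclusively in the combinations $D_\xi \mathcal{V}^{-1}$ and $D_\eta \mathcal{V}^{-1}$.

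The heart of the argument is therefore the claim that $D_\xi \mathcal{V}^{-1}$ and $D_\eta \mathcal{V}^{-1}$ are invariant under a change of polynomial basis, and I would prove this by a direct change-of-basis computation. Let $\{\phi_k\}$ and $\{\tilde\phi_k\}$ be two bases of $\mathcal{P}^{N+1}(\T)$ related by $\tilde\phi_k = \sum_l B_{kl}\phi_l$ with an invertible matrix $B$. Evaluating at the fixed flux points gives $\tilde{\mathcal{V}} = \mathcal{V} B^T$, while differentiating and evaluating at the fixed solution points gives $\tilde{D}_\xi = D_\xi B^T$ and $\tilde{D}_\eta = D_\eta B^T$. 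Hence
\begin{equation*}
  \tilde{D}_\xi \tilde{\mathcal{V}}^{-1} = D_\xi B^T (B^T)^{-1}\mathcal{V}^{-1} = D_\xi \mathcal{V}^{-1},
\end{equation*}
and likewise for $D_\eta \mathcal{V}^{-1}$: the change-of-basis factor $B^T$ cancels. Intrinsically, $D_\xi \mathcal{V}^{-1}$ is nothing but the matrix of the basis-free map sending the nodal flux values at the flux points to the $\xi$-derivatives of the unique interpolating polynomial at the solution points, which makes the cancellation transparent.

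Combining these observations, every factor in (\ref{semidiscr}) is independent of $\alpha,\beta,\gamma$, so $S$ is literally the same matrix for every APK family and its eigenvalues coincide; this is in fact slightly stronger than the stated assertion. The hard part is not the algebra but pinning down the two standing hypotheses that make it work: that the flux points and the solution points are prescribed geometrically, independently of the basis — this is exactly what forces $\mathcal{V}, D_\xi, D_\eta$ to transform by the \emph{same} matrix $B^T$ — and that the flux interpolation problem is unisolvent, so that $\mathcal{V}$ (and $\tilde{\mathcal{V}}$) is invertible for every admissible parameter choice and $D_\xi \mathcal{V}^{-1}$ is well defined throughout. Once these two facts are verified, the proof reduces to the one-line cancellation displayed above.
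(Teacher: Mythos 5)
Your proposal is correct and takes essentially the same approach as the paper: both isolate the basis dependence of $S$ in the products $D_\xi \mathcal{V}^{-1}$ and $D_\eta \mathcal{V}^{-1}$ and show, using unisolvency of the flux points, that these equal the basis-free interpolation--differentiation operator. The paper merely organizes the cancellation differently, expanding each $\phi_k$ in the Lagrange basis at the flux points to get the factorization $D_\xi = D_\xi^{Lag}\mathcal{V}$ (hence $D_\xi\mathcal{V}^{-1} = D_\xi^{Lag}$ explicitly), which is precisely the ``intrinsic'' identification you note after your change-of-basis cancellation.
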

\begin{proof}
  Looking at $S$, only the differential matrices $D_\xi, D_\eta$, given by (\ref{diffM}),
  and the Vandermonde matrix $\mathcal{V}$, coming from the interpolation approach,
  depend on the underlying basis $\{ \phi_k \ | \ 1 \leq k \leq K_F \}$ of $\mathbb{P}_N(\mathbb{T})^2$.
  Hence the proof is done, when we can eliminate this matrices from $S$. Therefore note that 
  \begin{align*}
    D_\xi & = \Bigl( \partial_\xi \phi_k \bigl( T_{0,0}({\bf x}^s_j) \bigr) \Bigr)_{j,k}^{K_s,K_F} \\
      & = \bigl( \partial_\xi \phi_k(\boldsymbol{x}) |_{\boldsymbol{x}={\bf x}_j^s} \bigr)_{j,k} \\
      & = \Bigl( \partial_\xi \bigl[ \sum_{i=1}^{K_F} \phi_k({\bf x}^F_i) L_i(\boldsymbol{x}) \bigr]|_{\boldsymbol{x}={\bf x}_j^s} \Bigr)_{j,k} \\
      & = \bigl( \sum_{i=1}^{K_F} \phi_k({\bf x}^F_i) \partial_\xi L_i(\boldsymbol{x})|_{\boldsymbol{x}={\bf x}_j^s} \bigr)_{j,k} \\
      & = \bigl( \partial_\xi L_i({\bf x}_j^s) \bigr)_{j,i}^{K_s,K_F} \cdot \bigl( \phi_k({\bf x}^F_i) \bigr)_{i,k}^{K_F,K_F} \\
      & = D_\xi^{Lag} \mathcal{V}
  \end{align*}
  and analogous $D_\eta = D_\eta^{Lag} \mathcal{V}$ with
  \begin{displaymath}
    D_\xi^{Lag} := \bigl( \partial_\xi L_i({\bf x}_j^s) \bigr)_{j,i}^{K_s,K_F} \ , \ D_\eta^{Lag} := \bigl( \partial_\eta L_i({\bf x}_j^s) \bigr)_{j,i}^{K_s,K_F}
  \end{displaymath}
  independent of the basis $\{ \phi_k \}$. Hence
  \begin{align}\label{subst}
    D_\xi \mathcal{V}^{-1} = D_\xi^{Lag} \ , \ D_\eta \mathcal{V}^{-1} = D_\eta^{Lag}
  \end{align}
  and
  \begin{equation}
    \begin{aligned}\label{indep}
      S = & - \bigl( \cos(\psi) D_\xi^{Lag} [M_1^{-1,0}T_{-1,0} + M_1^{0,0} + M_1^{0,-1}T_{0,-1}] \\
	  & + \sin(\psi) D_\eta^{Lag} [M_2^{-1,0}T_{-1,0} + M_2^{0,0} + M_2^{0,-1}T_{0,-1}] \bigr) E_{Lag}
    \end{aligned}
  \end{equation}
  is independent of the underlying basis $\{ \phi_k \}$.
\end{proof}
Note that we have done the proof for an arbitrary basis $\{\phi_k\}$ of $\mathbb{P}_N(\mathbb{T})^2$ and that by (\ref{subst}) moreover the more general universal update scheme (\ref{schemeMatrix2}) for a general (non-linear) conservation law becomes
\begin{align*}
  \frac{\d {\bf u}^s}{\d t}(t) = - ( \xi_x D_\xi^{Lag} + \xi_y D_\eta^{Lag} ) {\bf F}_1(t) - ( 
\eta_x D_\xi^{Lag} + \eta_y D_\eta^{Lag} ) {\bf F}_2(t)
\end{align*}
with ${\bf F}_1(t),{\bf F}_2(t)$ independent of the basis $\{\phi_k\}$, when we use an interpolation approach to reconstruct the flux. This leads to the following corollary.
\begin{cor}
  The SD Method (without filtering) with a interpolation approach is independent of the underlying polynomial basis.
\end{cor}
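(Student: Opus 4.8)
The plan is to recognise that the Corollary is the non-linear counterpart of the theorem just proved and rests on exactly the same algebraic identity, now applied to the full update scheme \eqref{schemeMatrix2} rather than to the special matrix $S$. The decisive observation is that the relations $D_\xi \mathcal{V}^{-1} = D_\xi^{Lag}$ and $D_\eta \mathcal{V}^{-1} = D_\eta^{Lag}$ from \eqref{subst} were obtained solely from the definition \eqref{diffM} of the differentiation matrices together with the interpolation property $\phi_k(\boldsymbol{x}) = \sum_i \phi_k({\bf x}^F_i) L_i(\boldsymbol{x})$. Neither the linear flux, the periodic data, nor any feature of the test case entered that computation, so the identity holds for an arbitrary basis $\{\phi_k\}$ of $\mathbb{P}_N(\mathbb{T})^2$ and may be reused verbatim here.

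First I would substitute this identity into the general (possibly non-linear) update scheme \eqref{schemeMatrix2}, which takes that form precisely because the \emph{interpolation} approach recovers the modal flux coefficients through $\hat{\bf F}_\nu = \mathcal{V}^{-1}{\bf F}_\nu$ — this is exactly where the hypothesis of the Corollary is used. Every basis-dependent matrix then enters only via the products $D_\xi \mathcal{V}^{-1}$ and $D_\eta \mathcal{V}^{-1}$, and these collapse to the Lagrange matrices $D_\xi^{Lag}, D_\eta^{Lag}$, which are assembled purely from derivatives of Lagrange polynomials at the solution points. Thus \eqref{schemeMatrix2} becomes
\begin{equation*}
  \frac{\d {\bf u}^s}{\d t}(t) = - ( \xi_x D_\xi^{Lag} + \xi_y D_\eta^{Lag} ) {\bf F}_1(t) - ( \eta_x D_\xi^{Lag} + \eta_y D_\eta^{Lag} ) {\bf F}_2(t),
\end{equation*}
in which the only quantities that could still carry a dependence on $\{\phi_k\}$ are the flux vectors ${\bf F}_1(t)$ and ${\bf F}_2(t)$.

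It then remains to confirm that ${\bf F}_1(t)$ and ${\bf F}_2(t)$ are themselves basis-free. By \eqref{numerical flux} their entries are either the physical flux $F_\nu(\boldsymbol{u}({\bf x}_k,t))$ at interior flux points or the numerical flux $F_{k,\nu}^{num}$ at edge flux points; since the flux point set $\{{\bf x}_k\}$ is fixed and the solution is reconstructed with Lagrange polynomials rather than with the APK family, the traces $\boldsymbol{u}({\bf x}_k,t)$ feeding both the physical flux and the numerical flux function $H$ are produced without reference to $\{\phi_k\}$. I expect this last verification to be the only delicate point: one must check that the APK basis re-enters nowhere in the assembly of ${\bf F}_\nu$, in particular that $H$ acts on solution traces alone and never on modal flux coefficients. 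Once this is secured, the entire right-hand side depends only on the fixed point sets, the Jacobian entries, and ${\bf u}^s$, so the SD Method without filtering is independent of the underlying polynomial basis, as claimed.
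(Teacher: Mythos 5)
Your proposal is correct and follows essentially the same route as the paper: the paper also reuses the identities $D_\xi \mathcal{V}^{-1} = D_\xi^{Lag}$, $D_\eta \mathcal{V}^{-1} = D_\eta^{Lag}$ from the preceding theorem's proof, substitutes them into the general update scheme \eqref{schemeMatrix2}, and notes that $\mathbf{F}_1(t), \mathbf{F}_2(t)$ are basis-independent under the interpolation approach. Your additional care in checking that the flux vectors never see the APK basis (solution traces via Lagrange reconstruction, numerical flux acting only on those traces) merely makes explicit what the paper asserts in one line.
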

\begin{rem}
 While the SD Method with an interpolation approach theoretically is independent of the underlying polynomial basis, 
 in computation their condition properties may have an influence on the 
resulting scheme, see Table \ref{Konditionszahl}.
 By substituting $D_\xi$ by $D_\xi^{Lag}$ and $D_\eta$ by $D_\eta^{Lag}$ and working with (\ref{indep}) in our implementation, 
 we blind out condition properties of the underlying basis and just focus on the pure stability properties of the SD Method. 
 However, one should note that this won't be the case in the next Subsection, where the Vandermonde matrix 
 and its inverse will not cancel out each other. Then, to cover condition issues as well, we focus on a proper subrange of parameters which 
 will ensure good condition numbers. In such a subrange we then will optimize the linear stability of the SD Method.
\end{rem}
Table \ref{realparts1} shows the maximal real parts among all test cases for certain orders $N$.
\begin{table}[!htb]
\begin{center}
 \begin{tabular}{c|c}
 $N$ &  maximal real part $L$  \\ \hline \hline
 2 & 5.228025e+00 \\
 3 & 7.671293e+00 \\
 4 & 1.360921e+01 \\
 5 & 2.010942e+01 \\
\end{tabular} 
\caption{maximal real parts}\label{realparts1}
\end{center}
\end{table}
These values show
that the SD Method on triangles is 
not stable, independent of the polynomial 
basis in the interpolation approach.
This was already stated by Van den Abeele,
Lacor and Wang in \cite{van2008stability}. 
As one can see in Table \ref{realparts1} the 
instabilities, i.e. Re$(\lambda)$, of the method 
can be 'quite high'. To deal with this problem we will apply the modal filter in the SD Method in the next Section.

\subsection{Numerical Stability Analysis for the SD Method with Modal Filtering}

As we proposed in Section \ref{Filter}, we use the SV Method seen as a spectral method with modal filtering to obtain milder instabilities in the von Neumann analysis and hence preserving the scheme to blow up at discontinuities. While we used $D\mathcal{V}^{-1} = D^{Lag}$ in Section \ref{SDMOF} to show that the SD Method is independent of the underlying basis this term becomes 
\begin{displaymath}
  DM_\sigma\mathcal{V}^{-1}
\end{displaymath}
in the semi discretization $S_\sigma$, given by (\ref{semidiscr2}), 
for the SD Method with modal filtering. This again leads to $S_\sigma \equiv 
S_\sigma(\alpha,\beta,\gamma)$ and the method to depend on the corresponding basis of APK 
polynomials. Following section \ref{SDMOF} we use Algorithm \ref{OF} with $S=S_\sigma$ for a 
(linear) stability analysis for the SD Method with modal filtering. But first, note that the values 
of the modal filter $\sigma$ in the filter matrix $M_\sigma:= \diag{\sigma_1,\dots,\sigma_{K_F}}$ 
depend on certain constants, which were explained in Section \ref{SVM}. For the following numerical 
results we set
\begin{align*}
  h = \frac{\sqrt{2}}{6} \, , \quad \text{see (\ref{viscosityStrength})}, \\
  C_{fix} = \frac{1}{2} \, , \quad \lambda_{\max} = 1 \, , \quad \text{see (\ref{timestep})}.
\end{align*}
For order $N=3$, different orders  $p\in \mathbb{N}_1^{5}$ and constants $c \in \mathbb{N}_2^{8}$ in the filter strength (\ref{viscosityStrength}), 
Table \ref{realparts2} - \ref{realparts4} shows the smallest real parts $L$ 
(maximum over all test cases) 
and their corresponding parameters $(\alpha,\beta, \gamma) \in P$.
\begin{table}[!htb]
\begin{subfigure}[!htb]{0.49\textwidth} 
\centering
\begin{tabular}{c|c|c|c}
    $p$ & $c$ & $(\alpha,\beta,\gamma)$ & $L$ \\ \hline \hline
    1 & 2 & (2,2,6) & 4.474928e+00 \\
      & 3 & (2,2,6) & 3.452260e+00 \\
      & 4 & (2,2,6) & 2.825792e+00 \\
      & 5 & (2,2,6) & 2.341697e+00 \\
      & 6 & (2,2,4.2) & 1.949950e+00 \\
      & 7 & (2,2,4.5) & 1.630852e+00 \\
      & 8 & (2,2,4.6) & 1.378782e+00 \\
\end{tabular}
\end{subfigure}%
  ~
\begin{subfigure}[!htb]{0.49\textwidth} 
\centering
\begin{tabular}{c|c|c|c}
  $p$ & $c$ & $(\alpha,\beta,\gamma)$ & $L$ \\ \hline \hline
    2 & 2 & (2,2,5.2) & 2.500718e+00 \\
      & 3 & (2,2,5) & 2.000045e+00 \\
      & 4 & (2,2,4.7) & 1.783495e+00 \\
      & 5 & (2,2,5.7) & 1.576658e+00 \\
      & 6 & (2,2,5.5) & 1.398979e+00 \\
      & 7 & (2,2,5.3) & 1.242022e+00 \\
      & 8 & (2,2,5.1) & 1.111981e+00 \\
\end{tabular}
\end{subfigure}%
   \caption{maximal real parts for $p=1$ and $p=2$}\label{realparts2}
\end{table}
\begin{table}[!htb]
\begin{subfigure}[!htb]{0.49\textwidth} 
\centering
\begin{tabular}{c|c|c|c}
    $p$ & $c$ & $(\alpha,\beta,\gamma)$ & $L$ \\ \hline \hline
    3 & 2 & (2,2,4.6) & 2.137056e+00 \\
      & 3 & (2,2,5.5) & 1.826333e+00 \\
      & 4 & (2,2,5.2) & 1.560388e+00 \\
      & 5 & (1.1,1.1,6) & 1.349429e+00 \\
      & 6 & (1.1,1.1,6) & 1.181007e+00 \\
      & 7 & (1,1.1,6) & 1.050789e+00 \\
      & 8 & (1,1,6) & 9.382501e-01 \\ \hline
\end{tabular}
\end{subfigure}%
  ~
\begin{subfigure}[!htb]{0.49\textwidth} 
\centering
\begin{tabular}{c|c|c|c}
    $p$ & $c$ & $(\alpha,\beta,\gamma)$ & $L$ \\ \hline \hline
    4 & 2 & (2,2,5.4) & 1.836013e+00 \\
      & 3 & (2,2,5.1) & 1.511921e+00 \\
      & 4 & (2,2,4.7) & 1.339156e+00 \\
      & 5 & (1,1,5.9) & 1.187603e+00 \\
      & 6 & (1,1,5.9) & 1.094414e+00 \\
      & 7 & (1,1,5.9) & 1.029666e+00 \\
      & 8 & (1,1,5.9) & 9.742486e-01 \\ \hline
\end{tabular}
\end{subfigure}%
   \caption{maximal real parts for $p=3$ and $p=4$}\label{realparts3}
\end{table}
\begin{table}[!htb] 
\centering
\begin{subfigure}[!htb]{0.49\textwidth}
\begin{tabular}{c|c|c|c}
    $p$ & $c$ & $(\alpha,\beta,\gamma)$ & maximal real part $L$ \\ \hline \hline
    5 & 2 & (2,2,5) & 1.525311e+00 \\
      & 3 & (1,1,5.9) & 1.355060e+00 \\
      & 4 & (2,2,4.3) & 1.186385e+00 \\
      & 5 & (2,2,4.1) & 1.075861e+00 \\
      & 6 & (2,2,4) & 1.100214e+00 \\
      & 7 & (1,1,5.9) & 1.097985e+00 \\
      & 8 & (1,1,5.9) & 1.056802e+00   
\end{tabular}
\end{subfigure}%
   \caption{maximal real parts for $p=5$}\label{realparts4}
\end{table}
We can see in Tables \ref{realparts2} - \ref{realparts4} that one can obtain 
much milder instabilities of the method by modal filtering, 
where also the choice of family of APK polynomials for the underlying basis of $\mathbb{P}_N(\mathbb{T})^2$ has an influence.
Note that in fact both $p$, $c$ and every parameter $\alpha, \beta$ and $\gamma$ effect the linear stability. 
Approximately, the parameter tuples $(1,1,6)$ and $(2,2,5)$ appear promising. 
However, by considering their conditional numbers in Table 
\ref{Konditionszahl}, the parameter tuple $(1,1,6)$ becomes unreasonable, due to 
its high condition numbers. 
Thus, we will focus on parameter tuple $(2,2,5)$ in the following example which will help us to find suitable choices for 
the parameter $c$ and the filter order $p$.
\begin{ex}
  First, we observe that the constant $c$ has an influence on the linear stability by comparing the parameter tuple $(2,2,5)$ for 
  $N=3$, $p=2$ and different $c$. In this case we have
  \begin{align*}
    L = & 2.502205e+00 \text{ for } c=2, \\
    L = & 2.000045e+00 \text{ for } c=3, \\
    L = & 1.824525e+00 \text{ for } c=4, \\
    L = & 1.639219e+00 \text{ for } c=5, \\
    L = & 1.459258e+00 \text{ for } c=6, \\
    L = & 1.290925e+00 \text{ for } c=7, \\
    L = & 1.135887e+00 \text{ for } c=8,
  \end{align*}
  which indicates that by increasing parameter $c$ the instability gets milder.
  This coincides with Theorem \ref{Spekralersatz} which states the equivalence of modal filtering by our natural filters 
  and the SV Method. 
  In this formulation $c$ is proportional to $\varepsilon_N$, see (\ref{viscosityStrength}), 
  which again is proportional to the dissipation we add to the underlying conservation law by the viscosity term. \\
  
  In the same way, by comparing the parameter tuple $(2,2,5)$ for $N=3$ and $c=8$, we can observe the influence 
  of the parameter $p$. Here
  \begin{align*}
    L = & 1.406930e+00 \text{ for } p=1, \\
    L = & 1.135887e+00 \text{ for } p=2, \\
    L = & 1.227645e+00 \text{ for } p=3, \\
    L = & 1.481018e+00 \text{ for } p=4, \\
    L = & 1.593715e+00 \text{ for } p=5,  
  \end{align*}
  suggest $p=2$ to be the best choice. \\
  
  Finally, we observe the influence of the parameter $\gamma$ by comparing parameter tuples $(2,2,\gamma)$ for 
  $N=3$, $c=8$ and $p=2$. Here
  \begin{align*}
    L = & 1.284659e+00 \text{ for } \gamma=4, \\
    L = & 1.212759e+00 \text{ for } \gamma=4.5, \\
    L = & 1.135887e+00 \text{ for } \gamma=5, \\
    L = & 1.200003e+00 \text{ for } \gamma=5.5, \\
    L = & 1.216268e+00 \text{ for } \gamma=6 
  \end{align*}
  show the influence.
\end{ex}
The natural filter of $(1,1,2)$, the one of $(2,2,5)$ and the shaped raised 
cosine filter of $8$th order can be seen and compared in Figure 
\ref{filter_plots}.

\begin{figure}[!htb]
  \centering
  \begin{subfigure}[b]{0.9\textwidth}
    \includegraphics[width=\textwidth]{%
      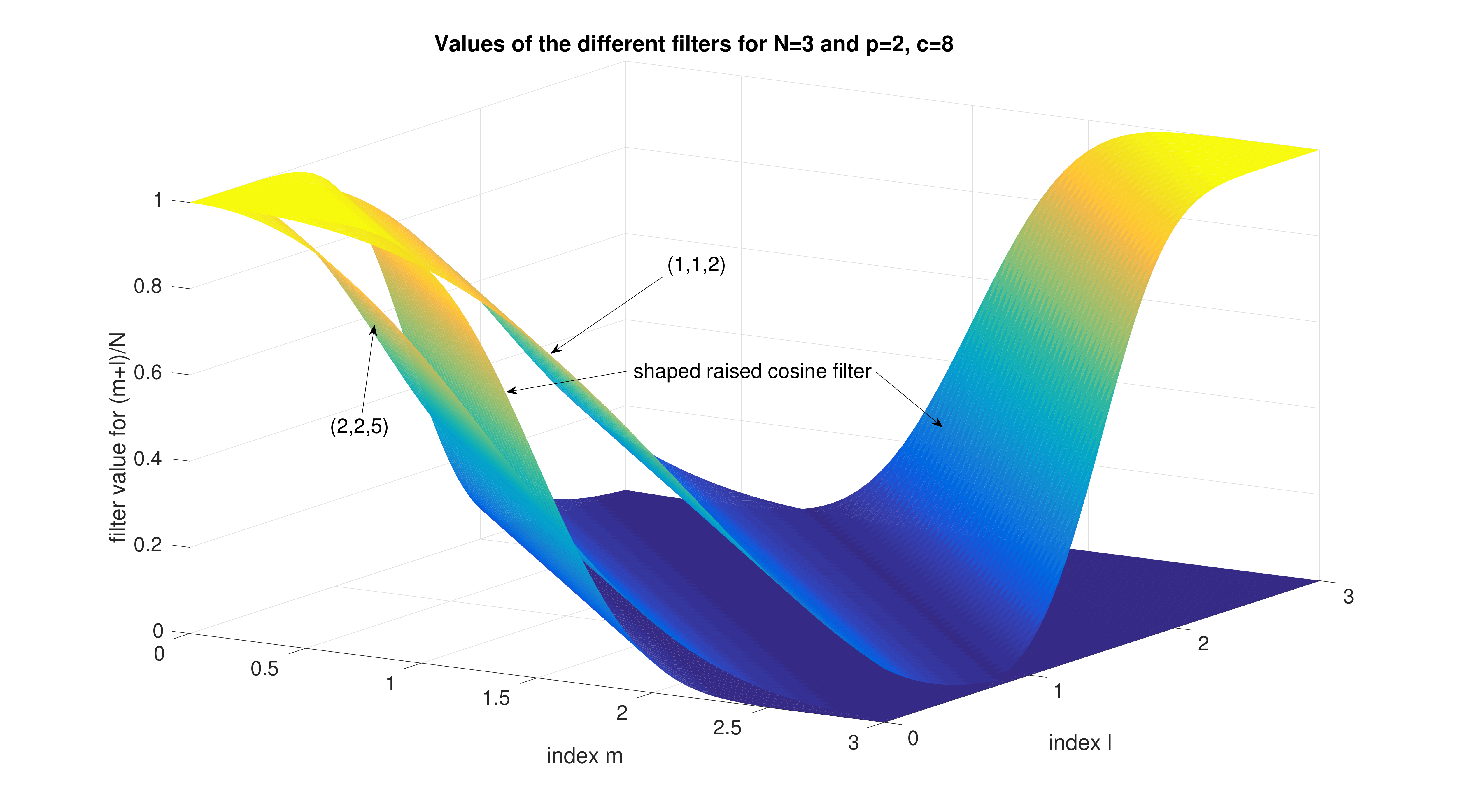} 
  \end{subfigure}%
  \\
  \begin{subfigure}[b]{0.9\textwidth}
    \includegraphics[width=\textwidth]{%
      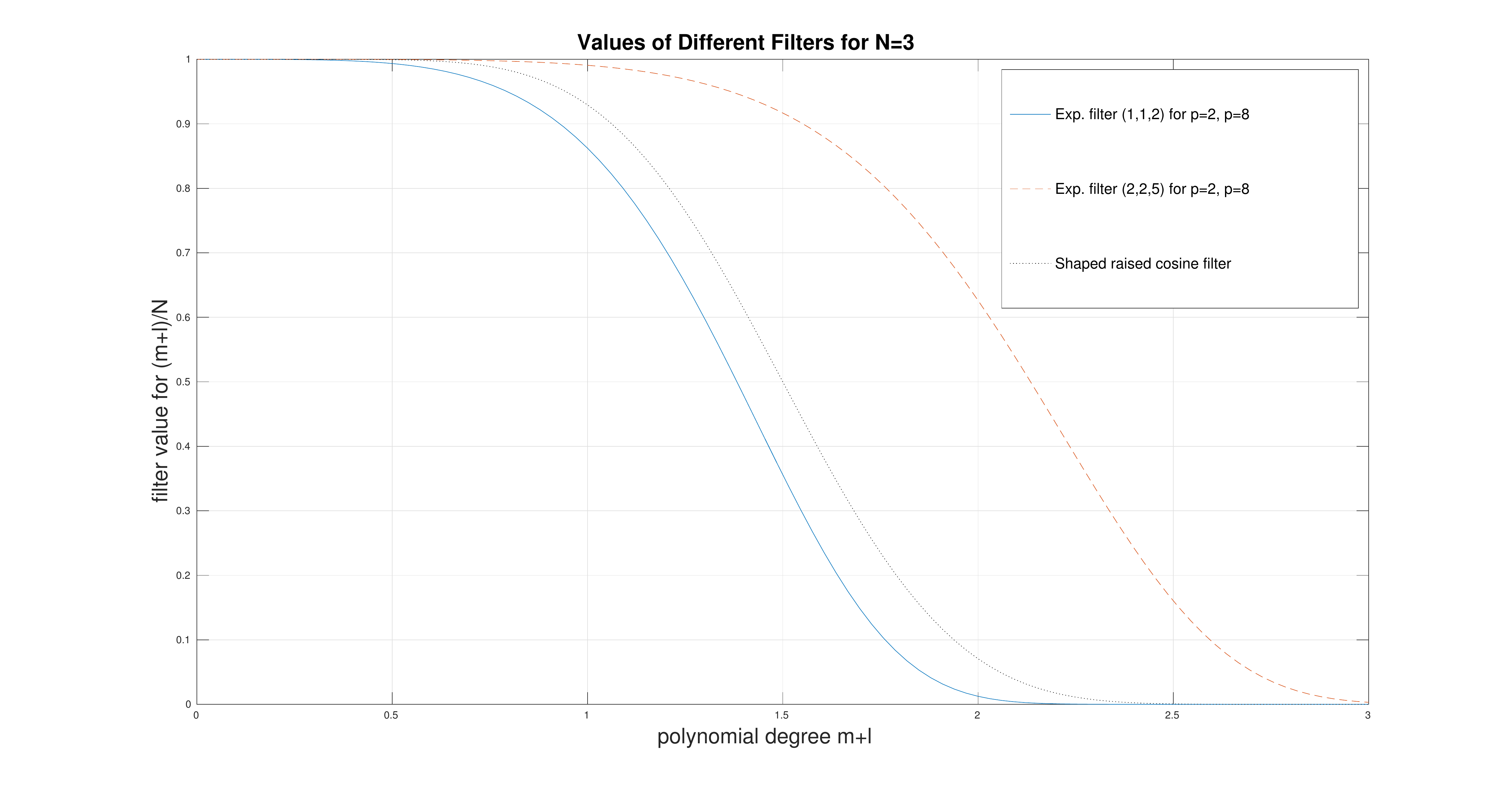}
  \end{subfigure}%
  \caption{$(\alpha,\beta,\gamma)=(1,1,2)$, $(2,2,5)$ with their natural filters and the shaped raised cosine filter} \label{filter_plots}
\end{figure}

We end this Section by recommending the parameter tuple $(\alpha,\beta, \gamma)=(2,2,5)$ with $c=8$ and filter order $p=2$ for the polynomial degree $N=3$. 
The following Section will provide numerical test which build around this choices.
\section{Numerical tests} \label{Numerical Test}

After presenting our theoretical results we want to give a short numerical investigation to show that our conclusions are justified.
We consider the two dimensional Burgers' equation
\begin{equation*}
 u_t(x,y,t)+u(x,y,t)\l( u_x(x,y,t)+u_y(x,y,t)\r)=0
\end{equation*}
in the domain $[-1,1]^2$ with the initial condition
\begin{equation*}
 u_0(x,y)=\frac{1}{4}+\frac{1}{2} \sin \l( \pi(x+y)\r)
\end{equation*}
and periodic boundary conditions $u(-1,y,t)=u(1,y,t)$ and $u(x,-1,t)=u(x,1,t)$. 
At $t=0.5s$ two discontinuities  arise at $y=\frac{3}{2}-x$ and $y=\frac{5}{2}-x$. We use 
$1088$ triangles for the spatial discretization and present the numerical solutions at $t=0.45s$ for several parameter selections. 
Without filtering high oscillations already develop in much earlier calculations and the SD Method collapses. \\
In Tables \ref{Burgerstabelle1}-\ref{Burgerstabelle3} the minimum and maximum 
of the numerical solutions for filter parameters $p$ and $c$ are shown in the 
polynomial cases 
$(\alpha,\beta,\gamma)= (1,1,2)$ and $(2,2,5) $. These values give a first indication for the stability of the resulting method. 

\begin{table}
\begin{subfigure}[b]{0.49\textwidth} 
  \centering
  \begin{tabular}{c|c|c|c}
    $p$  & $c$ & min & max  \\ \hline \hline
    $2 $ & $8$ & -3.24 & 6.00 \\
    $3$  & $ 8 $& -6.55 & 3.67  \\
    $4$  &$6 $ & -12.32 & 6.03 \\
    $4$  & $8 $ & -11.08 & 4.96 \\
  \end{tabular} 
  \caption{$ (\alpha,\beta,\gamma)=(1,1,2 ),\;$  $t=0.4s$}
  \label{Burgerstabelle1}
\end{subfigure}%
  ~
\begin{subfigure}[b]{0.49\textwidth} 
  \centering
  \begin{tabular}{c|c|c|c}
    $p$  & $c$ & min & max  \\ \hline \hline
    $2 $ & $8$ & -1.55 & 2.46 \\
    $3$  & $ 8 $& -6.54 & 4.23  \\
    $4$  &$6 $ & -3.10 & 5.98  \\
    $4$  & $8 $ & -2.44 & 5.64 \\
  \end{tabular} 
  \caption{$ (\alpha,\beta,\gamma)=(2,2,5 ),\;$  $t=0.45s$}
  \label{Burgerstabelle3}
\end{subfigure}
\end{table}

The polynomial degree $N$  in these test-cases is always three. 
For most parameters $p$, $c$ and $(\alpha,\beta,\gamma)$, we were not able to calculate in time further than $0.4s$.
However, for the choice $p=2$ and $c=8$ motivated by our stability analysis we were indeed able to calculate 
till $0.45$ for both parameters $(1,1,2)$ and $(2,2,5)$.
In Figure \ref{Abbildung1} the 2d and 3d plots of the numerical solutions are 
presented. 
No post-processing was applied.

\begin{figure}[!htb]
  {{\includegraphics[width=0.24\textwidth]{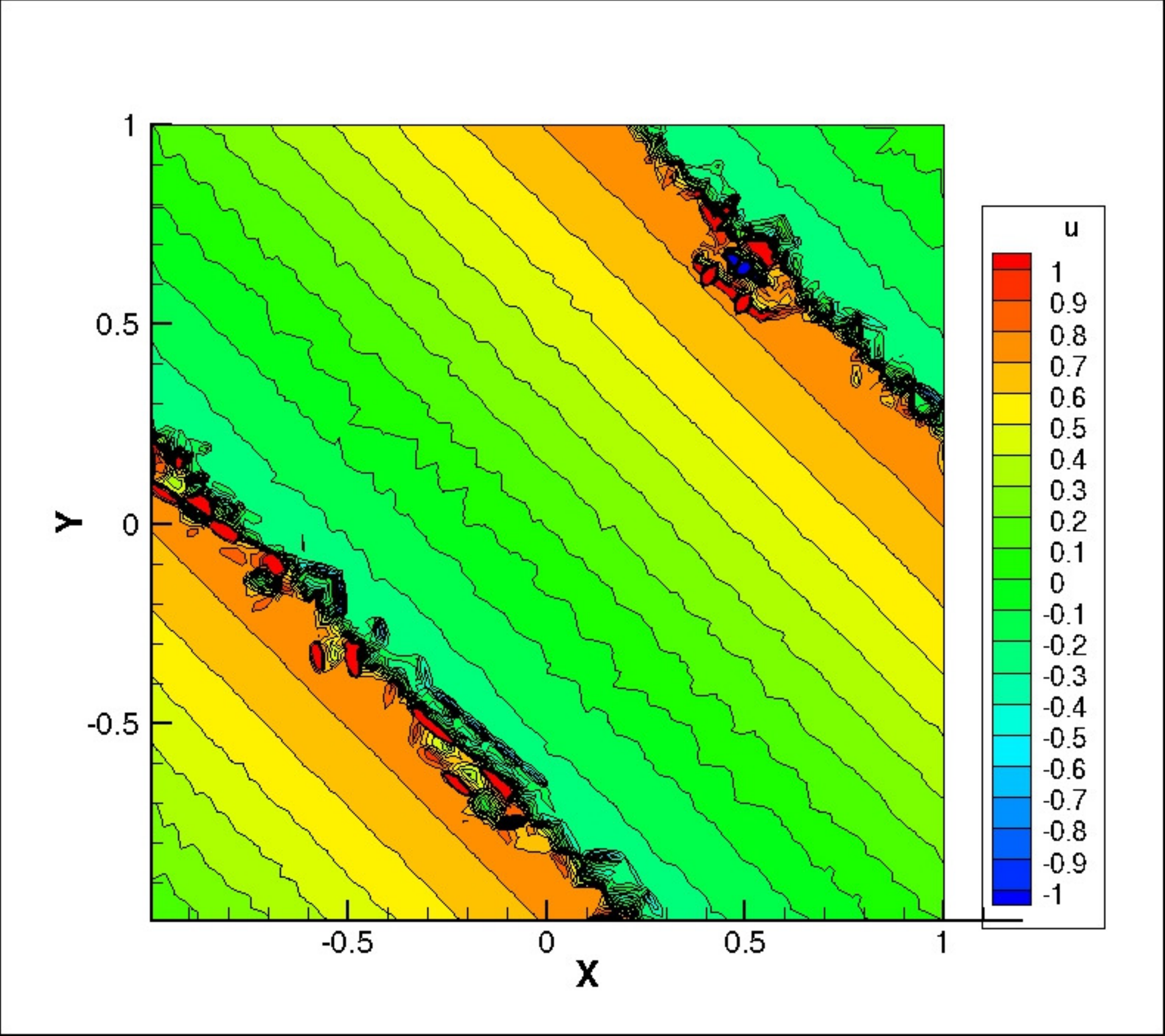}}
  {\includegraphics[width=0.24\textwidth]{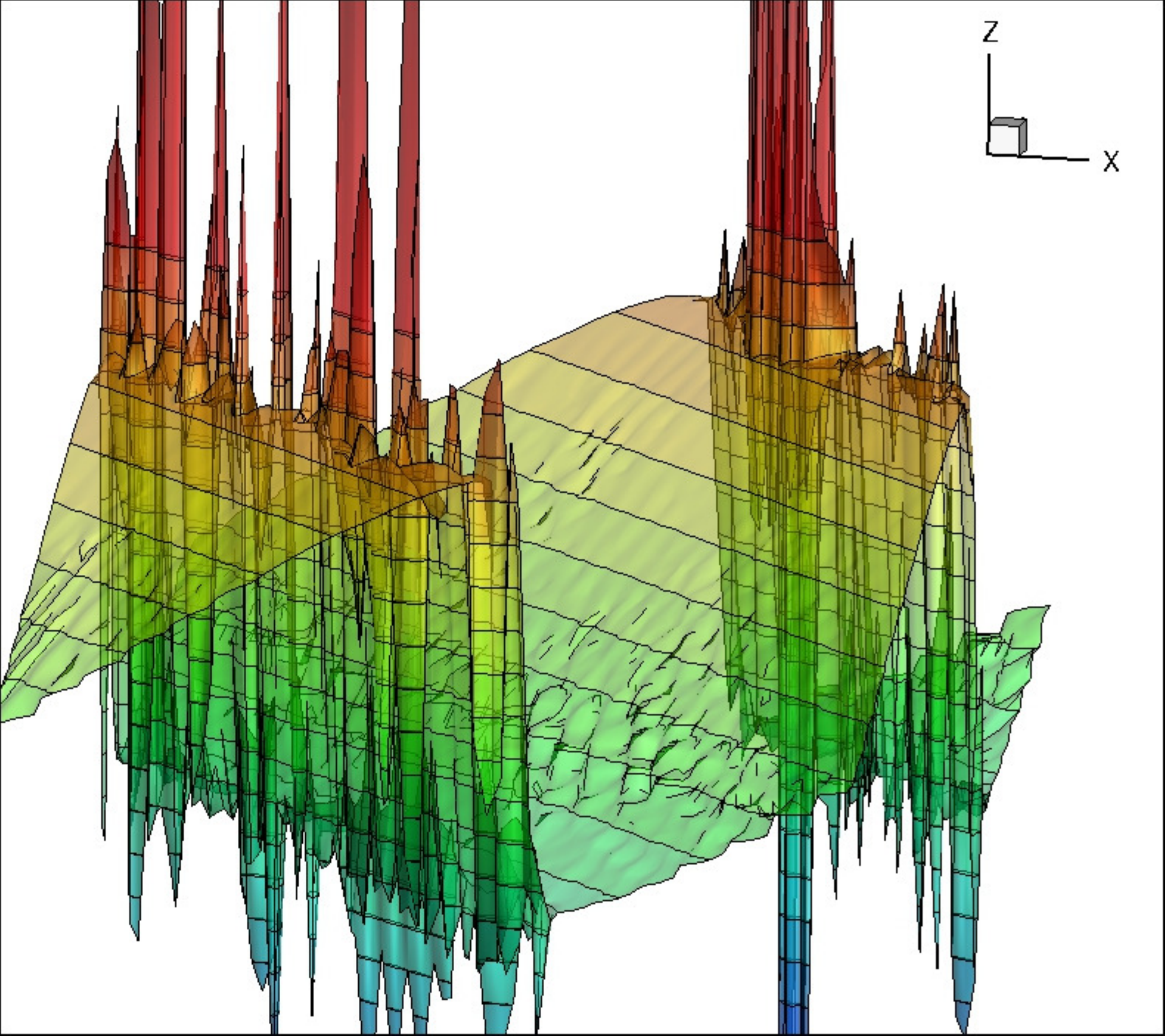}}
  {\includegraphics[width=0.24\textwidth]{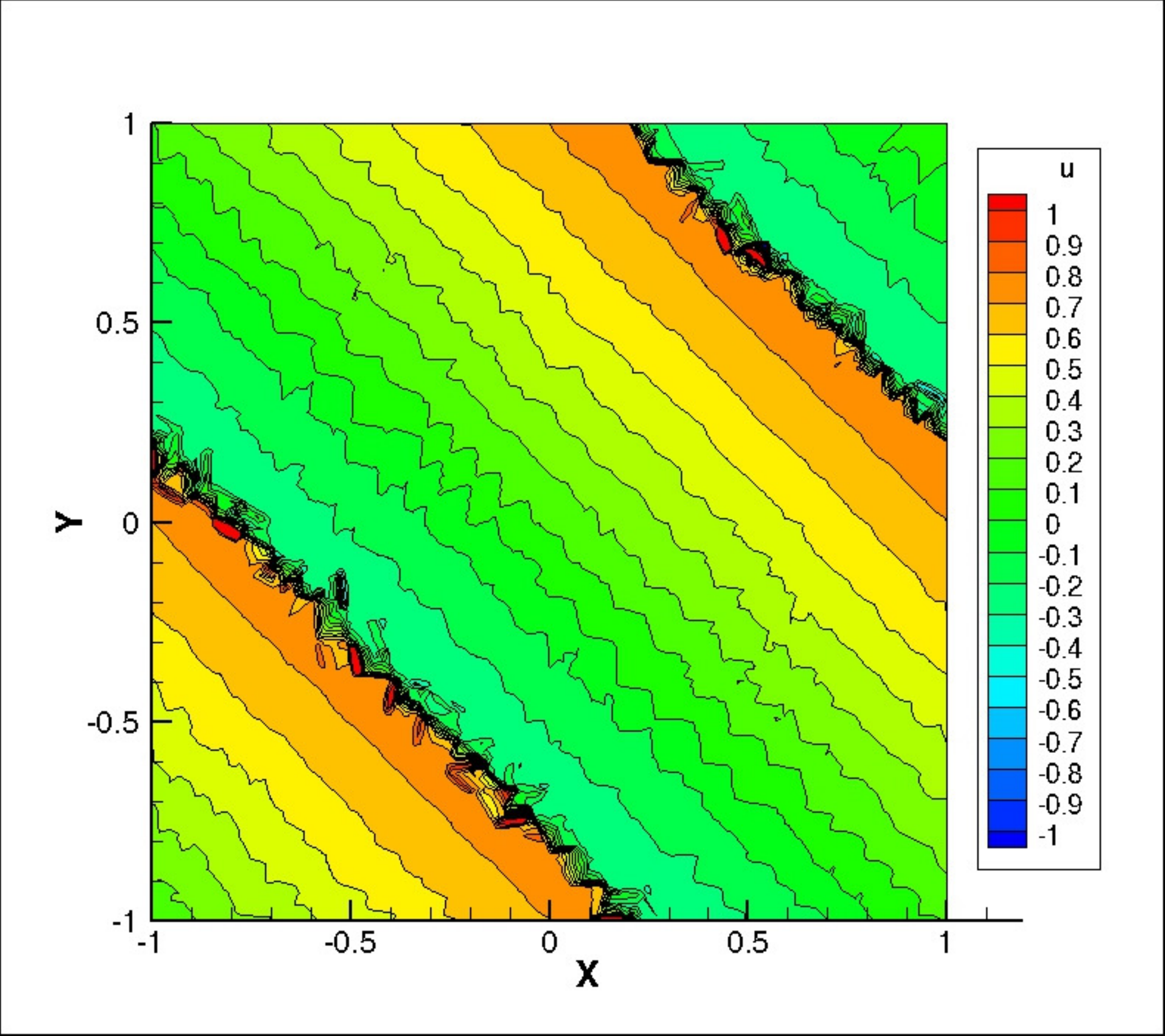}}
  {\includegraphics[width=0.24\textwidth]{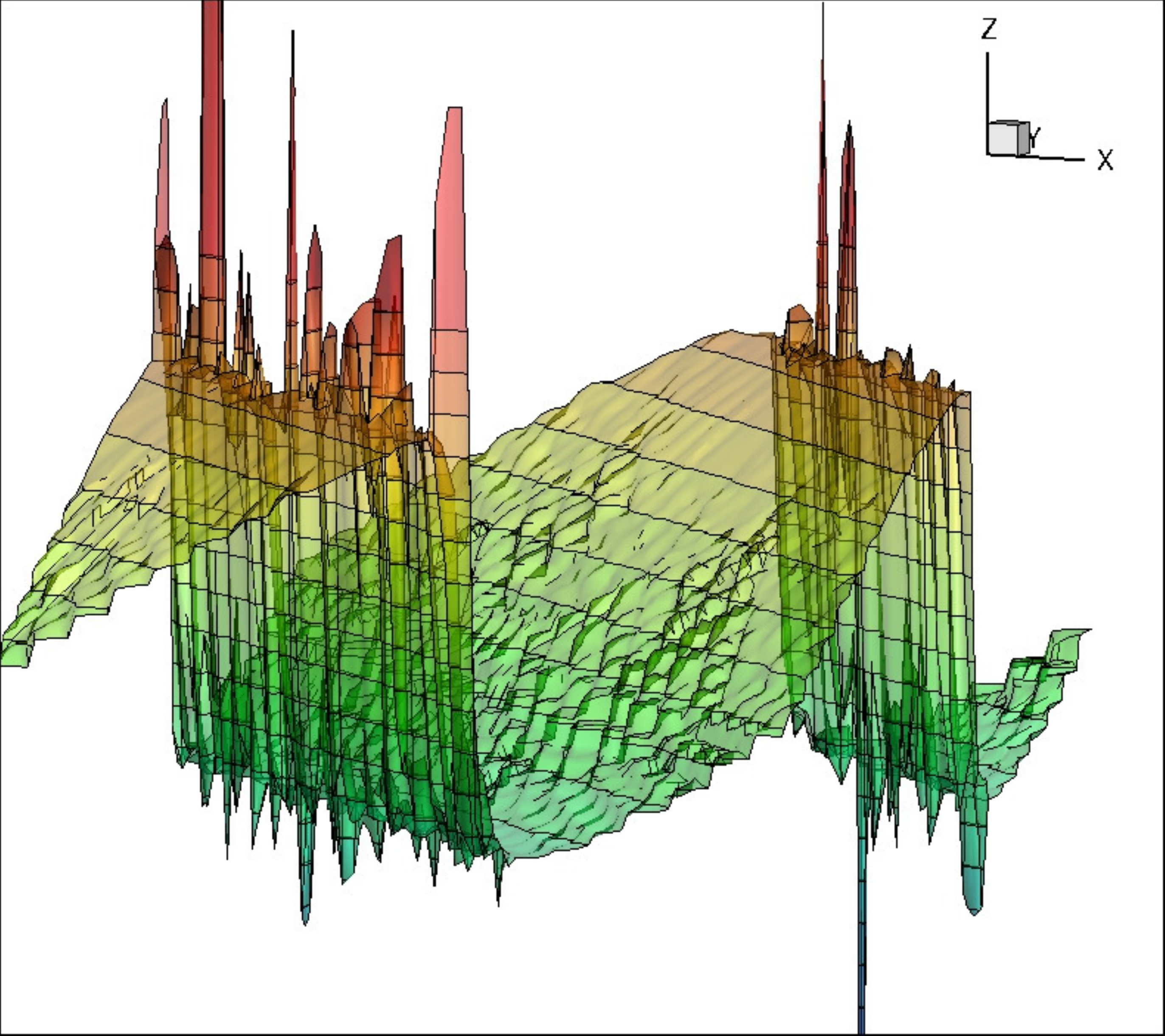}}}
  \text{\hspace{1.3cm}  $(\alpha,\beta,\gamma)=(1,1,2),\; t=0.45s$ \hspace{1.8cm} $(\alpha,\beta,\gamma)=(2,2,5),\; t=0.45s$ }
  \caption{  $N=3$,  $p=2$,  $C=8$} \label{Abbildung1}
\end{figure}

Predicted by the stability analysis in Section \ref{Stability} and 
coinciding with Tables \ref{Burgerstabelle1} and \ref{Burgerstabelle3}, 
the selection of parameters $p=2$ and $c=8$ using the APK polynomials $(2,2,5)$
and their natural filters show significant stronger stability properties than 
for instance $(1,1,2)$.
Here, two attributes play a key role in the calculation.
First $(2,2,5)$ has a small condition number, see Table \ref{Konditionszahl}. 
But second and quiet more important are the good properties - using the natural filter with $\gamma=5$ - 
from point of \emph{linear} stability, suggested by the stability analysis in Section \ref{Stability}.
Following this observation, we wish to combine a small condition number like for $(1,1,2)$ and 
the good stability of the choice $(\alpha,\beta,\gamma)=(2,2,5)$ with $p=2$ and $c=8$.
Therefor we next observed the polynomial basis of $(1,1,2)$, however equipped with the natural filter of 
$(2,2,5)$ instead of it's own one. 
I.e we choose $\gamma=5$ in \eqref{Filterapproximiert} instead of the natural choice $\gamma=2$.
The resulting numerical solutions, again presented in a 2d and 3d plot, 
can be seen in Figure \ref{Abbildung2} and once more show an improvement.

\begin{figure}[!htb]
{{\includegraphics[width=0.5\textwidth]{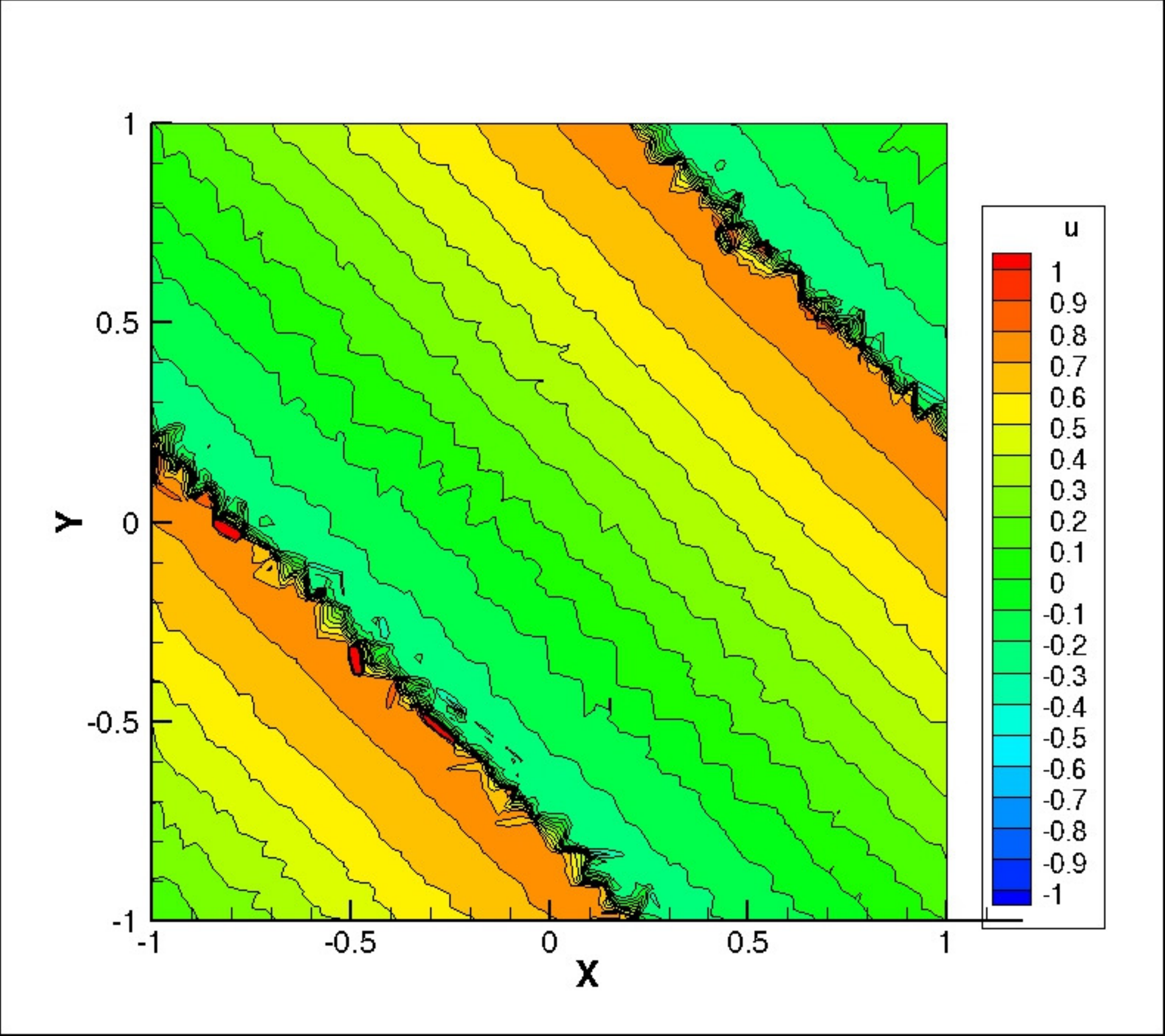}}{
\includegraphics[ width=0.5\textwidth]{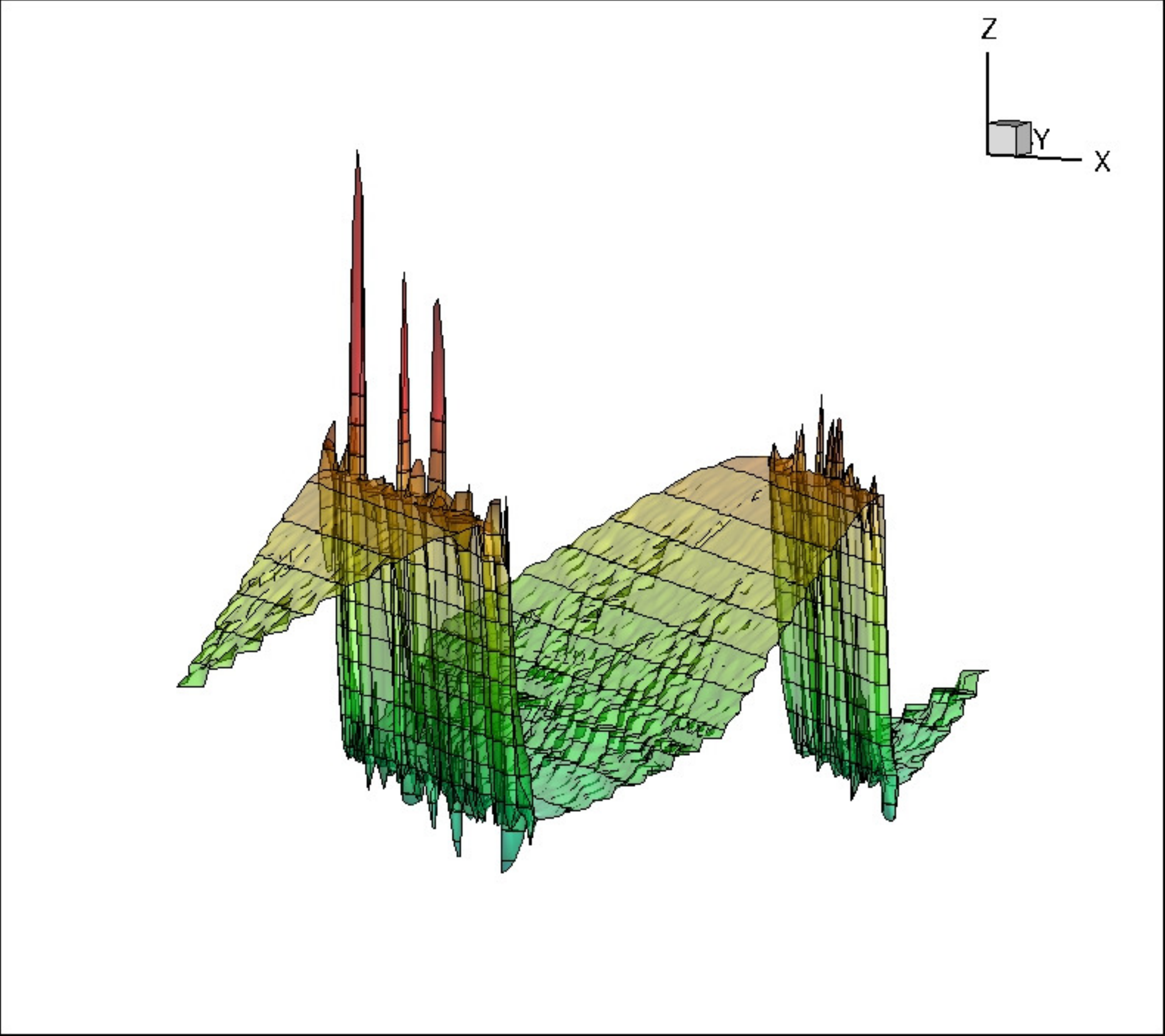}}}
\text{\hspace{2.5cm}   $t = 0.45s$ \hspace{5cm} $t = 0.45s$ }
\caption{$(\alpha,\beta,\gamma)=(1,1,2)$ with the natural filter of $p=2$, $c=8$ and $(2,2,5)$} \label{Abbildung2}
\end{figure}

\section{Discussion and Conclusion}
We started this paper by extending the SD method by more general bases of APK polynomials 
instead of Lagrange or PKD polynomials and instantly observed their numerical qualification by 
their condition numbers. 
The classical SD method was known to be unstable on triangular elements and unfortunately we were able 
to obtain the same result for every extended version of the SD method.
Hence addressing the problem of Gibb's phenomenon, which often leads unstable schemes to blow up, 
we applied the well known SV method to milder instabilities. 
Due to the APK polynomials to fulfill certain eigenvalue problems, we derived an equivalent but 
from point of computational costs much more efficient formulation of the SV method via 
modal filtering by natural exponential filters with respect to the polynomial basis. 
Whole new error estimates for filtered APK extensions for smooth functions then were given. 
We also gave a von Neumann analysis for linear stability for the extended SD method with modal filtering 
by natural filters of APK polynomials. 
The idea was for the right choice of APK polynomials to may have some positive influence on the 
stability and accuracy of the method.
To the best of our knowledge, this work was the first to give such an analysis for a scheme with 
spectral filtering.
This stability analysis indeed suggested a certain choice of APK polynomials and parameters in their 
natural filters, which then were observed in a short numerical investigation.
The numerical solutions we obtained show significant stronger stability properties than the 
ones resulting from classical SD methods. 
By combining our results from the analysis of condition numbers and linear stability for different 
APK polynomials, we equipped basis polynomials with favorable condition numbers with modal filters coming 
from a polynomial basis with strong stability properties.
Once more, this led to a significant improvement of the numerical solutions.

Despite this, it should also be said that Flux Reconstruction (FR) or Correction Procedure via 
Reconstruction (CPR) methods nowadays are getting more and more interesting, due to their strong 
stability properties, while SD methods are receiving decreasing attention.
We still analyzed modal filtering based on SD methods to investigate the pure influence of the 
orthogonal polynomials and their natural filters. 
In the FR/CPR approach a correction term is applied which might cancel out effects of the filter.
However, the observations made for spectral filtering in this paper target a first step to both 
a better understanding of the influence of spectral filtering and in future to gainfully adapt these ideas to 
FR/CPR methods. 
Improvements might be higher CFL numbers, thus larger time steps and more efficient methods, as well as 
more advanced methods in post-processing.

In future researches further developments for modal filtering itself would be favorable. 
Due to high condition numbers of the Vandermonde matrix for several bases of APK polynomials, 
we had to restrict ourself to certain subranges of parameter families. 
However, it could be possible to find filters with even better properties outside of these subranges. 
To bypass the problem of quite bad condition numbers in this new ranges, it would be promising to work 
with projection approaches in the series expansion, instead of interpolation ones. 

\section{Appendix:}

$M_i^{m,n}$ for the SD Method of 3rd order ($K_s=6,K_F=10$):
\begin{align*}
  M_1^{-1,0} & = \diag{E_{4 \times 4},0_{6 \times 6}}, \\
  M_1^{0,0} & = \diag{0_{5 \times 5},E_{2 \times 2},0_{1 \times 1},E_{2 \times 2}}, \\
  M_1^{0,-1} & = \diag{0_{4 \times 4},E_{1 \times 1},0_{2 \times 2},E_{1 \times 1},0_{2 \times 2}}, 
\\
  M_2^{-1,0} & = \diag{0_{1 \times 1},E_{2 \times 2},0_{7 \times 7}}, \\
  M_2^{0,0} & = \diag{0_{3 \times 3},E_{1 \times 1},0_{1 \times 1},E_{2 \times 2},0_{1 \times 
1},E_{1 \times 1},0_{1 \times 1}}, \\
  M_2^{0,-1} & = \diag{E_{1 \times 1},0_{3 \times 3},E_{1 \times 1},0_{2 \times 2},E_{1 \times 
1},0_{1 \times 1},E_{1 \times 1}}.
\end{align*}

$M_i^{m,n}$ for the SD Method of 4th order ($K_s=10,K_F=15$):
\begin{align*}
  M_1^{-1,0} & = \diag{E_{5 \times 5},0_{10 \times 10}}, \\
  M_1^{0,0} & = \diag{0_{6 \times 6},E_{3 \times 3},0_{1 \times 1},E_{2 \times 2},0_{1 \times 
1},E_{2 \times 2}}, \\
  M_1^{0,-1} & = \diag{0_{5 \times 5},E_{1 \times 1},0_{3 \times 3},E_{1 \times 1},0_{2 \times 
2},E_{1 \times 1},0_{2 \times 2}}, \\
  M_2^{-1,0} & = \diag{0_{1 \times 1},E_{3 \times 3},0_{11 \times 11}}, \\
  M_2^{0,0} & = \diag{0_{4 \times 4},E_{1 \times 1},0_{1 \times 1},E_{3 \times 3},0_{1 \times 
1},E_{2 \times 2},0_{1 \times 1},E_{1 \times 1},0_{1 \times 1}}, \\
  M_2^{0,-1} & = \diag{E_{1 \times 1},0_{4 \times 4},E_{1 \times 1},0_{3 \times 3},E_{1 \times 
1},0_{2 \times 2},E_{1 \times 1},0_{1 \times 1},E_{1 \times 1}}.
\end{align*}

$M_i^{m,n}$ for the SD Method of 5th order ($K_s=15,K_F=21$):
\begin{align*}
  M_1^{-1,0} & = \diag{E_{6 \times 6},0_{15 \times 15}}, \\
  M_1^{0,0} & = \diag{0_{7 \times 7},E_{4 \times 4},0_{1 \times 1},E_{3 \times 3},0_{1 \times 
1},E_{2 \times 2},0_{1 \times 1},E_{2 \times 2}}, \\
  M_1^{0,-1} & = \diag{0_{6 \times 6},E_{1 \times 1},0_{4 \times 4},E_{1 \times 1},0_{3 \times 
3},E_{1 \times 1},0_{2 \times 2},E_{1 \times 1},0_{2 \times 2}}, \\
  M_2^{-1,0} & = \diag{0_{1 \times 1},E_{4 \times 4},0_{16 \times 16}}, \\
  M_2^{0,0} & = \diag{0_{5 \times 5},E_{1 \times 1},0_{1 \times 1},E_{4 \times 4},0_{1 \times 
1},E_{3 \times 3},0_{1 \times 1},E_{2 \times 2},0_{1 \times 1},E_{1 \times 1},0_{1 \times 1}}, \\
  M_2^{0,-1} & = \diag{E_{1 \times 1},0_{5 \times 5},E_{1 \times 1},0_{4 \times 4},E_{1 \times 
1},0_{3 \times 3},E_{1 \times 1},0_{2 \times 2},E_{1 \times 1},0_{1 \times 1},E_{1 \times 1}}.
\end{align*}

\bibliographystyle{abbrv}
\bibliography{literature}

\end{document}